\newtheorem{thm}{Theorem}
\newtheorem{defi}{Definition}
\newtheorem{prop}{Proposition}
\newtheorem{cor}{Corollary}
\newtheorem{example}{Example}
\newtheorem{conjecture}{Conjecture}
\newcommand{\sst}{{<}}
\newcommand{\slt}{{>}}
\newcommand{\re}{{\mathbb R}}
\newcommand{\nat}{{\mathbb N}}
\newcommand{\n}{{\mathbb N}}
\newcommand{\argmin}{{\mbox{argmin}}}
\newcommand{\spann}{{\mbox{span}}}
\newcounter{nameOfYourChoice}
\begin{document}
\title{\LARGE \bf  Non-local Linearization of\\Nonlinear Differential Equations via {\emph{Polyflows}}}
\author{ Rapha\"el M. Jungers$^{1}$ 
\thanks{$^{1}$ICTEAM Institute, Universit\'e catholique de Louvain, Belgium. raphael.jungers@uclouvain.be.  R. J. is a F.R.S.-FNRS Research Associate and a Fulbright Fellow. He is also supported by the French Community of Belgium, the Walloon Region and the Innoviris Foundation.}
Paulo Tabuada$^{2}$ 
\thanks{Electrical and Computer Engineering Department at the University of California, Los Angeles. tabuada@ucla.edu. The work of Paulo Tabuada was partially supported by the NSF award 1645824.}}

\maketitle
\thispagestyle{empty}
\pagestyle{empty}

\begin{abstract}
Motivated by the mathematics literature on the algebraic properties of so-called ``polynomial vector flows'', we propose a technique for approximating nonlinear differential equations by linear differential equations. Although the idea of approximating nonlinear differential equations with linear ones is not new, we propose a new approximation scheme that captures both local as well as global properties. This is achieved via a hierarchy of approximations, where the $N$th degree of the hierarchy is a linear differential equation obtained by \emph{globally} approximating the $N$th Lie derivatives of the trajectories. 

We show how the proposed approximation scheme has good approximating capabilities both with theoretical results and empirical observations.  In particular, we show that our approximation has convergence range at least as large as a Taylor approximation while, at the same time, being able to account for asymptotic stability (a nonlocal behavior).  We also compare the proposed approach with recent and classical work in the literature.
\end{abstract}

\section{Introduction}

The constantly increasing scale and complexity of modern engineering systems has led to a renewed interest in the development of analysis techniques for nonlinear systems.   There is a great variety of techniques, all of which arguably come with their advantages and weaknesses.  Let us mention: \textbf{i)} exact methods, that leverage particular algebraic structures in the system equations such as feedback linearizability or flatness (e.g., \cite{levine2009analysis}); \textbf{ii)} optimization methods, such as Lyapunov or sum-of-squares, that can produce satisfactory answers, but rarely come with guarantees of efficiency because of nonconvexity of most problems/systems (e.g., \cite{henrion2005positive});  \textbf{iii)} other methods relying on the approximation of nonlinear systems by linear ones with the goal of applying powerful techniques from linear systems theory. In this last family, let us mention linearization (e.g., nearby an equilibrium point) or infinite-dimensional approaches like the Carleman linearization or the Koopman-operator approach \cite{kowalski1997nonlinear,mauroy2016global,kowalski1991nonlinear}. This list is far from exhaustive, but as the reader will see, the ideas developed in this paper bear similarities with these three approaches, while at the same time trying to cope with their disadvantages.

Our goal is to approximate the solutions of a nonlinear   differential equation of the form:
\begin{eqnarray}
\label{ode} \dot x&=& f(x), 
\end{eqnarray}
where $f$ is a given function from $\re^n$ to $\re^n.$ We denote by $\psi(x_0,t)$ the solution of \eqref{ode} with initial condition $x_0$ at time $t=0$. We assume that $f$ is $C^\infty$ (i.e., infinitely differentiable). 
Our goal is to find an approximation that leads to a closed-form expression for $\psi(x_0,t)$. However, rather than approximating the trajectory $\psi(x_0,t)$, we wish to \emph{approximate the differential equation}~\eqref{ode} with another one, which we can analyze with exact techniques.
More precisely, we will approximate \eqref{ode} with (a projection of) a linear differential equation:
\begin{eqnarray}
\label{ode-polyflow} \dot z&=& Az, 
\end{eqnarray}
where $z\in \re^{N}$ and $A\in \re^{N\times N}$ for some $N\in \n$. 
 The goal is to account for non-local properties of the trajectory, as for instance, stability, limit cycles, etc. \rmjj{As we will see in Section \ref{sec-num}, our approximated systems are able to reproduce such global behaviours, and we are thus hopeful that on a longer term, one could extend} this technique to approximate feedback-linearization of nonlinear systems, or even approximate optimal control, by applying classical linear optimal control techniques on the approximating linear system \eqref{ode-polyflow} \rmjj{rather than on the initial system}.  
 
 \subsection{Related work}
The literature on nonlinear systems is large (see \cite{khalil1996noninear} for a general account) and techniques to analyze nonlinear dynamical systems are countless.  Closer  to our setting are techniques that approximate nonlinear systems by linear, but infinite dimensional systems.  Among them, Carleman Linearization \cite{kowalski1997nonlinear,gaude2001solving}, or the Koopman \rmj{approach \cite{mauroy2016global,rowley2009spectral,mauroy2013isostables}}, have recently attracted attention while dating back to the 1930s. The main idea in these methods is to represent the dynamical system as a linear operator acting on an infinite dimensional Hilbert space (the space of functions). This functional view brings linearity but requires working on infinite dimensional spaces. In practice, a particular basis for the infinite-dimensional Hilbert space is chosen, and then the linear operator describing the dynamics is truncated for numerical analysis purposes. Even though these techniques bear some similarity with ours, we do not perform truncation but rather a more detailed approximation directly resulting in a \rmjj{finite dimensional} linear differential equation. For instance, in \cite{mauroy2016global}, the approximation relies on the choice of a basis of polynomials in order to approximate the nonlinear behavior, which is not the case in our method. However we believe that there are interesting connections to be made with these approaches, and we leave for further work a more detailed analysis of these connections.\\
Other approaches rely on linearization nearby a particular point in the state space \cite{hartman}, or more generally aiming at generating a linear system by identifying coefficients with the Taylor expansion of the vector field $f$ \cite{krener1984approximate}.  Compared with these works, our approach differs in that one of our goals is to approximate the system globally and not at a particular point in the state space.\\
Finally, several works have analyzed \rmjj{particular} nonlinear systems that are \emph{actually equivalent} to linear systems modulo some change of coordinates (e.g., \cite{su1982linear} and~\cite{FeedbackLin}). Further extensions of this line of work considered the embedding of nonlinear systems into higher-dimensional linear systems~\cite{levine1986nonlinear}. While the motivation is the same, our work aims at analyzing \emph{arbitrary} nonlinear systems. \rmjj{It can be seen as an approximate version, projecting in some suitable sense arbitrary systems} onto such a nice one enjoying exact linearizability properties.

 \section{Lie derivatives and Taylor approximations}

 The main tool we will use in order to generate a suitable linear system \eqref{ode-polyflow} is the concept of Lie derivative, which we now review.

\begin{defi}[Lie derivative]
Consider the differential equation~\eqref{ode} and let $g:\re^n\rightarrow \re$ be a $C^\infty$ function.  For $N \in \n,$ the \emph{N-th Lie derivative} of $g$ with respect to~\eqref{ode} is defined as follows:
\begin{eqnarray}
\label{lieder} \lieder{0}{f}g=g &  & N=0,\\ 
\nonumber\lieder{1}{f}g=\frac{\partial g}{\partial x}f &  & N=1,\\ 
\nonumber \lieder{N}{f}g= \frac{\partial \lieder{N-1}{f}g}{\partial x}f &  & \forall N\geq 1.
\end{eqnarray}

We denote the evaluation of $\lieder{N}{f}g$ at $x_0$ by $\lieder{N}{f}g(x_0)$. If $g$ is a vector valued function, $g:\re^n\rightarrow \re^m$, we still denote by $\lieder{N}{f}g(x)$ the vector of the $m$ Lie derivatives, i.e., the vector $v$ such that $v_i=\lieder{N}{f}g_i(x).$
\end{defi}

Let us also review two very classical ways of approximating the solution of a differential equation. In doing so we denote by $1_K$ the identity function on a set $K$.
\begin{defi}[Taylor approximation]
The Taylor approximation $\tayl{N}$ at $t=0$, and of order $N\in \nat$, of the solution $\psi(x_0,t)$ of \eqref{ode} is the polynomial:
\begin{equation} \label{eq-taylor}
\tayl{N}(x_0,t) = \sum_{i=0}^N  \lieder{i}{f}1_{\re^n}(x_0)\frac{ t^i}{i!}.
\end{equation}
\end{defi}
By increasing $N$, the Taylor approximation $\tau_N$ will converge to $\psi(x_0,t)$ as long as $t$ belongs to the \emph{radius of convergence} which is defined as the largest number $R\in \mathbb{R}_{+0}$ such that for every $t\in ]-R,R[$ the series:
$$\sum_{i=0}^\infty  \lieder{i}{f}1_{\re^n}(x_0)\frac{ t^i}{i!},$$
converges.

We recall the Hadamard formula for the radius of convergence.

\begin{thm}[Hadamard]
The radius of convergence of the Taylor approximation $\tayl{N}(x_0,t)$ is given by:
\begin{equation}\label{hadamard} R^{-1}=\limsup{\left(\frac{1}{i!}\left\vert \lieder{i}{f}1_{\re^n}(x_0)\right\vert \right)^{1/i}}.\end{equation}
In case the system has dimension $n$ larger than one, the notation $|\cdot|$ above denotes the 1-norm (sum of the absolute values of the entries of a vector).
\end{thm}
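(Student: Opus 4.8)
The plan is to recognize \eqref{hadamard} as the classical Cauchy--Hadamard theorem applied to the (vector-valued) power series $\sum_{i=0}^\infty c_i t^i$ in the real variable $t$, where $c_i := \frac{1}{i!}\lieder{i}{f}1_{\re^n}(x_0) \in \re^n$; by the definition of $\tayl{N}$ these are exactly the coefficients appearing in the Taylor approximation. Write $L := \limsup_i |c_i|^{1/i}$ with $|\cdot|$ the $1$-norm, so that the right-hand side of \eqref{hadamard} is precisely $L$, and it remains to show that the largest $R$ enjoying the stated convergence property equals $1/L$ (with the conventions $1/0 = \infty$ and $1/\infty = 0$). The one mild subtlety relative to the scalar case is that a series of vectors converges if and only if it converges componentwise, and converges absolutely (i.e. $\sum_i |c_i t^i| < \infty$) if and only if every component series does; throughout I would use $|c_i t^i| = |t|^i\,|c_i|$ to reduce each estimate to a scalar comparison.

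\emph{Convergence for $|t| < 1/L$.} First I would fix such a $t$ and choose $\rho$ with $|t| < \rho < 1/L$. Since $1/\rho > L = \limsup_i |c_i|^{1/i}$, the definition of $\limsup$ provides an index $i_0$ with $|c_i|^{1/i} < 1/\rho$, hence $|c_i| < \rho^{-i}$, for all $i \ge i_0$. Then $|c_i t^i| = |t|^i |c_i| < (|t|/\rho)^i$ for $i \ge i_0$, and since $|t|/\rho < 1$ the geometric series $\sum_i (|t|/\rho)^i$ converges; by comparison $\sum_i |c_i t^i| < \infty$, so the power series converges absolutely (hence componentwise) at $t$. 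This gives $R \ge 1/L$.

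\emph{Divergence for $|t| > 1/L$.} Next I would fix such a $t$ and choose $\rho$ with $1/L < \rho < |t|$. Since $1/\rho < L$, the definition of $\limsup$ yields infinitely many indices $i$ with $|c_i|^{1/i} > 1/\rho$, i.e. $|c_i| > \rho^{-i}$; for these indices $|c_i t^i| = |t|^i |c_i| > (|t|/\rho)^i \to \infty$. Hence the general term $c_i t^i$ does not tend to $0$, so the series diverges at $t$. Therefore $R \le 1/L$: were $R > 1/L$, some $t$ with $1/L < |t| < R$ would have the series simultaneously converging (by definition of $R$) and diverging (just shown). Combining the two parts yields $R = 1/L$, which is \eqref{hadamard}.

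The only point requiring a little care — essentially the sole obstacle — is handling the vector-valued coefficients: one must check that the "radius of convergence", defined via componentwise convergence of $\sum_i c_i t^i$, is governed by the $1$-norm $\limsup$. This holds because $|v| = \sum_j |v_j|$ sandwiches each $|v_j|$ between $0$ and $|v|$, so $\sum_i |c_i t^i| < \infty$ forces absolute convergence of every component, while $|c_i t^i| \to \infty$ along a subsequence rules out $c_i t^i \to 0$ and hence convergence of any component sum. Every other step is the standard geometric-series comparison.
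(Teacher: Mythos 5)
Your proof is correct: it is the standard Cauchy--Hadamard argument (geometric comparison for $|t|<1/L$, non-vanishing general term for $|t|>1/L$), with the vector-valued case correctly reduced to the scalar one via the $1$-norm. The paper does not prove this statement at all --- it simply recalls Hadamard's formula as a classical fact --- so your write-up supplies exactly the textbook proof being invoked, and nothing in it conflicts with the paper's usage.
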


A simpler linearization technique consists in retaining the linear part of~\eqref{ode} and eliminating all the nonlinear terms.

\begin{defi}[Linearization]
Denoting the \emph{Jacobian} of $f$ at $x^*\in \re^n$ by:
$$T_{x^*}f=\left.\frac{\partial f}{\partial x}\right\vert_{x=x^*}, $$ we define the \emph{linearization at $x^*\in\re^n$} of \eqref{ode} as the linear differential equation:
\begin{eqnarray}
\label{eq:linearization} \dot{(x-x^*)}=T_{x^*}f(x-x^*). 
\end{eqnarray}
\end{defi}

\section{Introducing polyflows}

Our work was first motivated by the mathematics literature where a great effort has been devoted to understand the properties of differential equations~\eqref{ode} for which the solution $\psi(x_0,t)$ is a polynomial function of the initial condition $x_0,$ hence the name \emph{polyflow}, a portmanteau obtained by blending the terms \emph{polynomial} and \emph{flow}  (see, e.g., \cite{van1995locally,levine1986nonlinear,bass1985polynomial,van1994locally,coomes1991linearization}).%
  We now provide an alternative definition of polyflows that makes the connection to our approximation problem clearer. 
\begin{defi}\label{def:polyflows}
The solution $\psi(x_0,t)$ of the differential equation~\eqref{ode} is a \emph{polyflow} if there exist $\ell\in \nat$ and an injective smooth map $\xi:\ \re^n \to \re^{\ell}$ such that $\xi\circ \psi(x_0,t)$ is the solution of a linear differential equation on $\re^\ell$ with initial condition $\xi(x_0)$.
\end{defi}
The map $\xi$ embeds the nonlinear differential equation into a linear one on a typically higher-dimensional state space. If the linear differential equation is denoted as in~\eqref{ode-polyflow}, the previous definition requires the existence of a matrix $A\in \re^{\ell\times\ell}$ such that $T_x \xi\cdot f=A\xi$. When this type of equality holds, the vector fields $f(x)$ and $Az$ are said to be $\xi$-related (see Def. 4.2.2 in~\cite{MTAA}). The theorem below provides a more constructive description of this equality:
\begin{thm} \cite{van1994locally}
The solution $\psi(x_0,t)$ of the differential equation~\eqref{ode} is a polyflow if and only if there exists $N\in\nat$ such that for every $n'\geq N$ and $k\in \{1,\hdots,n\}$ there exist $\lambda_{i,j}\in \re$ satisfying:
\begin{eqnarray} 
\label{AlgEq}
\qquad \lieder{n'}{f}f_k=\sum_{i\leq N,\ j\leq n}\lambda_{i,j}\lieder{i}{f}f_j.&& 
\end{eqnarray}
\end{thm}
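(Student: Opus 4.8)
The plan is to prove the two implications separately, exploiting the equivalence (already discussed in the excerpt) between the polyflow property and the existence of an injective smooth $\xi$ making $f$ and a linear field $Az$ be $\xi$-related, i.e.\ $T_x\xi\cdot f = A\xi$. The natural candidate for $\xi$ is the vector of low-order Lie derivatives. Concretely, I would set $\xi(x)$ to be the stacked vector whose components are $\lieder{i}{f}f_j(x)$ for $i\le N$, $j\le n$ (together with the component $1_{\re^n}$, i.e.\ $x$ itself, to guarantee injectivity — or more precisely to guarantee that $\xi$ contains enough information to recover $x$, since $\lieder{0}{f}f = f$ need not be injective). The key bookkeeping observation is that applying $\lieder{1}{f}$ to any component $\lieder{i}{f}f_j$ just produces $\lieder{i+1}{f}f_j$, so the time-derivative of $\xi$ along trajectories of \eqref{ode} is obtained by shifting indices upward.

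For the ``if'' direction: assume \eqref{AlgEq} holds. Then for the components of $\xi$ with $i<N$, the derivative $\lieder{i+1}{f}f_j$ is again a component of $\xi$, so that row of $A$ is trivial (a single $1$ in the appropriate place). For the components with $i=N$, the derivative is $\lieder{N+1}{f}f_j$, which by \eqref{AlgEq} (applied with $n' = N+1 \ge N$) is a fixed real linear combination of the $\lieder{i}{f}f_j$ with $i\le N$ — hence again a linear combination of components of $\xi$. This exhibits a matrix $A$ with $T_x\xi\cdot f = A\xi$, so $\xi$ conjugates the flow of \eqref{ode} to a linear flow, and composing with the projection onto the ``$x$'' block recovers $\psi(x_0,t)$ as a (polynomial, since it is a linear image of a matrix exponential applied to $\xi(x_0)$) function of $x_0$; hence $\psi$ is a polyflow. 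One must double-check that \eqref{AlgEq} for a single value $n'\ge N$ propagates to all higher $n'$ by induction (differentiate the identity $\lieder{n'}{f}f_k = \sum \lambda_{i,j}\lieder{i}{f}f_j$ once more and re-substitute), which is exactly why the statement quantifies over all $n'\ge N$ yet is equivalent to a finite condition.

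For the ``only if'' direction: suppose $\psi$ is a polyflow, so there is an injective smooth $\xi:\re^n\to\re^\ell$ and $A\in\re^{\ell\times\ell}$ with $T_x\xi\cdot f = A\xi$. Differentiating this relation repeatedly along $f$ gives $\lieder{m}{f}\xi = A^m\xi$ for all $m$. Now each coordinate $\xi_k$ is a smooth scalar function, and $\lieder{m}{f}f_j = \lieder{m}{f}(\lieder{1}{f}1_{\re^n})_j$; the point is that the space of functions spanned by $\{\lieder{m}{f}f_j : m\ge 0,\ j\le n\}$ is finite-dimensional, because it is contained in the span of the finitely many functions that can be built from the $\ell$ coordinates of $\xi$ under the Lie-derivative action, which stabilizes since $\lieder{1}{f}$ acts on $\mathrm{span}(\xi_1,\dots,\xi_\ell)$ as the matrix $A$. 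Finite-dimensionality of $\mathrm{span}\{\lieder{i}{f}f_j\}$ means there is an $N$ such that for all $n'$, $\lieder{n'}{f}f_k$ already lies in the span of those with $i\le N$; writing out the coefficients gives \eqref{AlgEq}. The main obstacle here is making the claim ``$\mathrm{span}\{\lieder{i}{f}f_j\}$ is finite-dimensional'' fully rigorous — one needs to argue that $f_j$ itself lies in (a finite-dimensional space built from) the coordinates of $\xi$, which uses injectivity/immersion properties of $\xi$ and the $\xi$-relatedness identity; this is precisely the step I expect to require the most care, and is presumably where the cited reference \cite{van1994locally} does the real work.
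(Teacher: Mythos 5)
The paper itself offers no proof of this statement (it is quoted from \cite{van1994locally}), so your proposal has to stand on its own. Your ``if'' direction does: stacking $1_{\re^n}$ together with $\lieder{i}{f}f_j$, $i\le N$, noting that Lie differentiation just shifts the index, invoking \eqref{AlgEq} with $n'=N+1$ for the top block, and propagating that single identity to all $n'\ge N$ by differentiating and re-substituting (this works precisely because the $\lambda_{i,j}$ are constants) is correct, and it is essentially the same construction the paper uses to pass from \eqref{AlgEq} to Corollary~\ref{cor:polyflow-linearform} and the map $\xi$ built from Lie derivatives of the identity; including the identity block gives injectivity and lets you read $\psi$ off the first block. (Minor imprecision: $e^{At}\xi(x_0)$ is a \emph{linear} function of $\xi(x_0)$, not a polynomial in $x_0$ unless the Lie derivatives are polynomials; for the paper's Definition~\ref{def:polyflows} this does not matter.)

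The genuine gap is in the ``only if'' direction, at exactly the step you flag. From Definition~\ref{def:polyflows} you get an injective smooth $\xi$ with $T_x\xi\cdot f=A\xi$, i.e.\ $W=\spann\{\xi_1,\dots,\xi_\ell\}$ is a finite-dimensional $\lieder{1}{f}$-invariant space of scalar functions. But to conclude that $\spann\{\lieder{i}{f}f_j:\ i\ge 0,\ j\le n\}$ is finite-dimensional you need the components $f_j$ (equivalently the coordinate functions, since $f_j=\lieder{1}{f}x_j$) to lie in \emph{some} finite-dimensional $\lieder{1}{f}$-invariant subspace, and injectivity of $\xi$ does not deliver this: injectivity only says $x$ can be recovered from $\xi(x)$ as a generally nonlinear function, whereas your argument needs $f_j$ to lie in the \emph{linear} span of functions generated from the $\xi_i$ under $\lieder{1}{f}$, and nothing in the hypotheses forces the $f_j$ (or the $x_j$) into that span. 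Your appeal to ``injectivity/immersion properties of $\xi$ and the $\xi$-relatedness identity'' is a placeholder, not an argument; as written, the implication does not close. This is where the cited literature does the real work: there the vector field is polynomial and ``polyflow'' means the flow is polynomial in $x_0$, and one establishes local finiteness of the derivation $\sum_j f_j\partial_j$ on the coordinate functions themselves, which immediately yields \eqref{AlgEq}. To complete your route under the paper's more liberal Definition~\ref{def:polyflows} you would need an additional argument producing a finite-dimensional invariant space containing the $f_j$ (or else work with the classical polynomial-flow definition), and that is the substantive missing piece.
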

That is, polyflows are dynamical systems endowed with a nilpotency property: after a  finite number of Lie derivatives, further differentiation only produces functions \rmjj{that are} trapped in a finite dimensional vector space. This very same idea appeared in the control literature (e.g.,~\cite{levine1986nonlinear}) related to the problem of embedding \rmjj{nonlinear} systems into, possibly higher dimensional, linear systems.
\begin{cor}\label{cor:polyflow-linearform}
If the solution $\psi(x_0,t)$ of the differential equation~\eqref{ode} is a polyflow, then, there exist $N\in \n$, coefficients $\Lambda_0,\dots, \Lambda_{N-1}\in \re^{n \times n}$, and $N-1$ functions $y_i:\re\rmjj{_+}\to\re^n$, $i\in\{1,\hdots,N-1\}$, such that:
\begin{equation}\label{eq:polyflow-linearform}
\begin{bmatrix}
\dot{\psi}\\ \dot y_1\\ \vdots \\ \dot y_{N-1}
\end{bmatrix} = 
\begin{bmatrix}
0& I &\dots &0\\
0 &\dots &\dots &0\\
0 &\dots &\dots &I\\
\Lambda_0 &\Lambda_1 &\dots &\Lambda_{N-1}
\end{bmatrix}
\begin{bmatrix}
\psi\\ y_1\\ \vdots \\ y_{N-1}
\end{bmatrix},
\end{equation}
with initial conditions: 
\begin{equation} \label{eq-init-cond} \psi(x_0,0)=x_0, \mbox{ and } y_i(0)=\lieder{i}{f}1_{\re^n}(x_0), i\in\{1,\hdots,N-1\}.\end{equation}
\end{cor}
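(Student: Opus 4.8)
The statement is essentially the characterization theorem of~\cite{van1994locally} read in suitable coordinates, so the plan is to exhibit the right variables and differentiate. First I would set $y_0(t):=\psi(x_0,t)$ and, for $i\geq 1$, $y_i(t):=\lieder{i}{f}1_{\re^n}(\psi(x_0,t))$, i.e. the $i$-th Lie derivative of the identity map evaluated along the trajectory; these are well-defined and $C^\infty$ because $f$ is, on the maximal interval of existence of $\psi(x_0,\cdot)$, which for the statement we take to contain $\re_+$. The only elementary fact needed is the chain-rule identity $\frac{d}{dt}g(\psi(x_0,t))=\frac{\partial g}{\partial x}(\psi(x_0,t))\,f(\psi(x_0,t))=(\lieder{1}{f}g)(\psi(x_0,t))$, valid for any $C^\infty$ map $g$. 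Applying it with $g=\lieder{i}{f}1_{\re^n}$ gives $\dot y_i=y_{i+1}$ for every $i\geq 0$; in particular $\dot\psi=\dot y_0=y_1$ (since $\lieder{1}{f}1_{\re^n}=f$) and $\dot y_i=y_{i+1}$ for $i=1,\dots,N-2$. These are precisely the block-shift rows of the matrix in~\eqref{eq:polyflow-linearform}, for any $N$.

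It remains to close the recursion at level $N$. Here I would use the bookkeeping identity $\lieder{m}{f}1_{\re^n}=\lieder{m-1}{f}f$ for $m\geq 1$ (immediate from the definition of the Lie derivative, using $\lieder{1}{f}1_{\re^n}=f$), so that the $j$-th entry of $\lieder{i+1}{f}1_{\re^n}$ equals $\lieder{i}{f}f_j$. Let $N_0$ be the integer supplied by the characterization theorem of~\cite{van1994locally} and put $N:=N_0+2$. Then $\dot y_{N-1}=\lieder{N}{f}1_{\re^n}(\psi(x_0,t))=\lieder{N-1}{f}f(\psi(x_0,t))$, and since $N-1=N_0+1\geq N_0$, applying the relation~\eqref{AlgEq} componentwise gives, for each $k$, scalars $\lambda^{(k)}_{i,j}$ with $\lieder{N-1}{f}f_k=\sum_{i\leq N_0,\,j\leq n}\lambda^{(k)}_{i,j}\lieder{i}{f}f_j$. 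Reindexing $m=i+1$, the right-hand side is a linear combination of the entries of $\lieder{m}{f}1_{\re^n}$ for $m\in\{1,\dots,N_0+1\}=\{1,\dots,N-1\}$ only, so collecting the coefficients into matrices $\Lambda_m\in\re^{n\times n}$ (with $(\Lambda_m)_{k,j}=\lambda^{(k)}_{m-1,j}$ for $m\geq 1$, and $\Lambda_0:=0$) yields the function identity $\lieder{N}{f}1_{\re^n}=\sum_{m=0}^{N-1}\Lambda_m\lieder{m}{f}1_{\re^n}$, hence $\dot y_{N-1}=\Lambda_0\psi+\sum_{m=1}^{N-1}\Lambda_m y_m$, which is the last block row of~\eqref{eq:polyflow-linearform}.

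Finally I would record the initial conditions: by construction $\psi(x_0,0)=x_0$ and $y_i(0)=\lieder{i}{f}1_{\re^n}(\psi(x_0,0))=\lieder{i}{f}1_{\re^n}(x_0)$, which is~\eqref{eq-init-cond}; assembling the rows gives~\eqref{eq:polyflow-linearform}. There is no real obstacle in this argument — it is a direct consequence of the relation~\eqref{AlgEq}. The only points requiring care are the index shift between Lie derivatives of $f$ (the objects appearing in~\eqref{AlgEq}) and of the identity map $1_{\re^n}$ (the objects appearing in the Taylor/polyflow picture), and the accompanying choice $N=N_0+2$ rather than $N=N_0+1$: the latter would leave $\lieder{N}{f}1_{\re^n}$ itself on the right-hand side and fail to close, whereas with $N=N_0+2$ the largest index that appears is exactly $N-1$. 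Note also that only the single instance $n'=N-1$ of the relation~\eqref{AlgEq} is used, so the possible dependence of the $\lambda_{i,j}$ on $n'$ is immaterial.
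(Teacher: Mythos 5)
Your proposal is correct and is essentially the argument the paper has in mind: the corollary is stated as an immediate consequence of the cited characterization, and your construction --- taking $y_i(t)=\lieder{i}{f}1_{\re^n}(\psi(x_0,t))$, obtaining the shift rows from $\dot y_i=y_{i+1}$ by the chain rule, and closing the last block row via \eqref{AlgEq} together with the identity $\lieder{m}{f}1_{\re^n}=\lieder{m-1}{f}f$ --- is exactly the intended derivation. Your careful index shift (choosing the corollary's $N$ two larger than the $N$ in \eqref{AlgEq} so that only Lie derivatives of order at most $N-1$ appear on the right-hand side) correctly handles the literal indexing of the cited theorem.
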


The integer $N$ will play an important role in the remainder of the paper and, for this reason, we call a polyflow satisfying the conditions of Corollary \ref{cor:polyflow-linearform} a \emph{$N$-polyflow.} 

Using the language of Definition~\ref{def:polyflows} we see that: 
$$\xi=\begin{bmatrix}1_{\re^n}\\ \lieder{1}{f}1_{\re^n} \\ \vdots \\ \lieder{N-1}{f}1_{\re^n}\end{bmatrix},\qquad T_x \xi\cdot f(x)=A\xi(x),$$
where $A$ is the matrix defining the right hand side of the linear differential equation~\eqref{eq:polyflow-linearform}.

Corollary \ref{cor:polyflow-linearform} may seem to imply that the polyflow property is very restrictive: the trajectory of a dynamical system which is a polyflow must be the projection of the trajectory of a linear system. Therefore, nonlinear behavior is only encoded in the initial conditions.  However, we will give below both theoretical and empirical evidence that an arbitrary nonlinear differential equation is ``close'' to a polyflow (provided that the degree $N$ is taken large enough).

\section{The technique:\\{polyflow} approximation of nonlinear systems}
The Taylor approximation \eqref{eq-taylor} proceeds by approximating the solution through the computation of its first derivatives. Rather than approximating the solution of the ODE, we directly approximate the differential equation itself, thereby hoping to obtain a global description of its solutions. 

Our main idea leverages the observation that, when the right hand side $f$ of \eqref{ode} is a polynomial of degree $d$, its solution is a $N$-polyflow if and only if it belongs to some lower dimensional manifold in the space of polynomials of degree $d$.  Indeed, one can see that in the characterization \eqref{AlgEq}, one can restrict the equalities to $n'=N$ (see Proposition \ref{prop-lie-derivatives} below), and these are algebraic conditions on the coefficients of the polynomial $f$.  Since polyflows have the nice property that they allow for a closed-form formula for their solutions, a natural idea is to ``project'' our system on the closest polyflow. We formalize this in the next two definitions:

\begin{defi}[Polyflow Approximation]\label{def:polyflow-approx}
Consider the differential equation~\eqref{ode} and a compact set $K\subset \re^n$. 
We define a \emph{N-th polyflow approximation of~\eqref{ode} on $K,$} to be any linear differential equation, as in \eqref{eq:polyflow-linearform}, where the matrices $\Lambda_i\in \re^{n\times n}$ are obtained by approximating $\lieder{N}{f}1_K$ with:
\begin{equation}
\label{eq:polyflow-approx}  \sum_{i=0}^{N-1} \Lambda_i \lieder{i}{f}1_K.
\end{equation}
\end{defi}

In the above definition, nothing is said on how to compute the parameters $\Lambda_i$ nor the initial conditions even though the quality of the approximation will depend on these choices. In, fact, it is not clear to the authors, at the moment, how to best define them.  Most probably, there is no unique ``right'' choice, but the wisest strategy will depend on what precisely is the final objective, as is often the case when one resorts to approximations. \rmj{We present here some natural strategies, even though others could be introduced.}

\begin{defi}[Numerical Strategies] \label{def:projections}
Consider the differential equation~\eqref{ode} and a compact set $K\subset \re^n$. 
The coefficients $\Lambda_i$ in Definition \ref{def:polyflow-approx} can be defined in the following ways:
\begin{enumerate}
\item \label{def:approx-1}  For any given norm $|\cdot |$ define: 
\begin{equation}\label{eq-approx-pf}\tilde f_N :=\argmin_{g\in S}\left\{\left\vert g-\lieder{N}{f}1_K\right\vert\right\}, 
\end{equation} where $S=\spann_{\re}{\{\lieder{i}{f}1_K:\ i\leq N-1\}}$ and $\Lambda_i$ is such that $\tilde f_N =\sum\Lambda_i \lieder{i}{f}1_K.$
\item \label{def:approx-1.bis} For any given norm $|\cdot |$ define: 
$$\tilde g := \argmin_{g\in S}\left\{\left\vert\lieder{N}{g}1_K-\lieder{N}{f}1_K\right\vert\right\}$$
where $S$ is the set of vector fields for which the solution of the corresponding differential equations are polyflows and $\Lambda_i$ is such that $\lieder{N}{\tilde g}{1_K} =\sum\Lambda_i \lieder{i}{\tilde g}{1_K}.$
\setcounter{nameOfYourChoice}{\value{enumi}}
\end{enumerate}

If one represents a polyflow approximation like in \eqref{eq:polyflow-linearform}, one has the following alternative definitions for the initial conditions: \begin{enumerate}
\setcounter{enumi}{\value{nameOfYourChoice}}
\item \label{def:approx-2}  For any $i\in\{1,\hdots,N-1\}$, $y_i(0)=\lieder{i}{f}1_K(x_0);$
\item \label{def:approx-2.bis} (In case Item \ref{def:approx-1.bis} was chosen above) Denoting $\tilde g$ as above,  $y_i(0)=\lieder{i}{\tilde g}{1_K}(x_0).$
\end{enumerate}
In Item \ref{def:approx-1.bis} above, we pre-specify the degree of the vector field $\tilde g,$ restrict it to be a polyflow, and then find such an optimum $\tilde g.$ In Item \ref{def:approx-1}, the approach is bolder: we only look at the Lie derivatives of the actual field $f,$ and directly project the $N$th Lie derivative onto the space generated by the previous ones. Unless specified otherwise, in our numerical experiments, we take Items \ref{def:approx-1} and \ref{def:approx-2}, together with the infinity norm: 
$$|f|:=\sup_{x\in K}{\{f(x)\}}.$$\end{defi}
\rmj{In our numerical experiments, we solve the approximation problem by discretizing the compact set $K,$ and solving the approximation problem on the obtained finite set of points. This can be done with standard Linear Programming. The approach scales well, as at step $N,$ it only requires to project $n$ polynomial scalar functions on a subspace spanned by $n(N-1)$ polynomial functions. The number of necessary discretization points in order to reach a specified accuracy grows exponentially with the dimension of the state-space, though. Also, note that the degrees of the polynomial functions obtained at step $N$ grow exponentially with $N.$}

We finish this section by providing a detailed analysis of two examples.  The first one is the trivial case where $f$ is linear.
\begin{example}
Consider the scalar linear system: 
$$ \dot x = - \lambda x.$$  
For any $N,$ there are infinitely many solutions to the approximation problem \eqref{eq-approx-pf} \rmjj{and the obtained approximation is exact} (because all the Lie Derivatives are linear functions).  However, there is one which is strictly better than the others: the one such that the polyflow has only one eigenvalue, equal to the eigenvalue of the local linearization, namely $-\lambda.$  With this choice, the coefficients of the polyflow in \eqref{eq:polyflow-linearform} are \footnote{The notation $\begin{pmatrix} N\\i \end{pmatrix}$ denotes the binomial coefficient.}: 
$$\Lambda_i = -|\lambda|^{N-i} \begin{pmatrix} N\\i \end{pmatrix},$$ 
and all the successive polyflow approximations lead to the exact trajectory.
\end{example}
\begin{example}\label{ex2}
We consider the two-dimensional system: 
\begin{equation} \dot x = \begin{bmatrix} x_1 +2 x_2^2 +x_2^3 -x_2^4 \\ -x_2 \end{bmatrix}. \end{equation} 
\rmj{The system is nonlinear, but as it turns out, it is a $4$-polyflow. It is thus equivalent to a linear system, and, as a consequence, our approximation becomes exact after finitely many steps.  We show this on Figure \ref{fig-ex-poly} (for  $x_0=(1,1)$).}

Let us detail the procedure for $N=1.$ First, the forms $x_1 +2 x_2^2 +x_2^3 -x_2^4 $ and $-x_2$ are projected\footnote{We project numerically by discretizing the state space on $[0,2]^2$ with discretization step $0.2.$ We minimize the $1$-norm of the error vector as previously stated. Since the projection is based on the whole compact set of interest ($[0,2]^2$ in this case) it captures global information about the system.} on $V=\spann_{\re}{\{x_1,x_2\}}.$ We retrieve the coefficients defining the projection on the linear space $V$ and obtain the approximating linear system:
\begin{equation} 
\dot x = \begin{bmatrix} x_1 +3.3143 x_2 \\ 
-x_2 \end{bmatrix}. 
\end{equation} 
Observe that, $\dot x_2$ being a linear function in the true system, the projection is exact for this variable. Finally we compute the initial conditions $\lieder{i}{f}1_K(x_0),\ i\sst N,$ and we obtain the approximate trajectory, represented in magenta in Figure \ref{fig-ex-poly}.  We do similarly for $N=2$ and $N=3.$

 Now, since the true system is a $4$-polyflow, the projection is exact for both variables at the fourth step (N=4), and the approximation is perfect, as one can see on Figure \ref{fig-ex-poly}. \end{example} 
\begin{figure}
\centering
\begin{tabular}{cc}
\includegraphics[scale = 0.15]{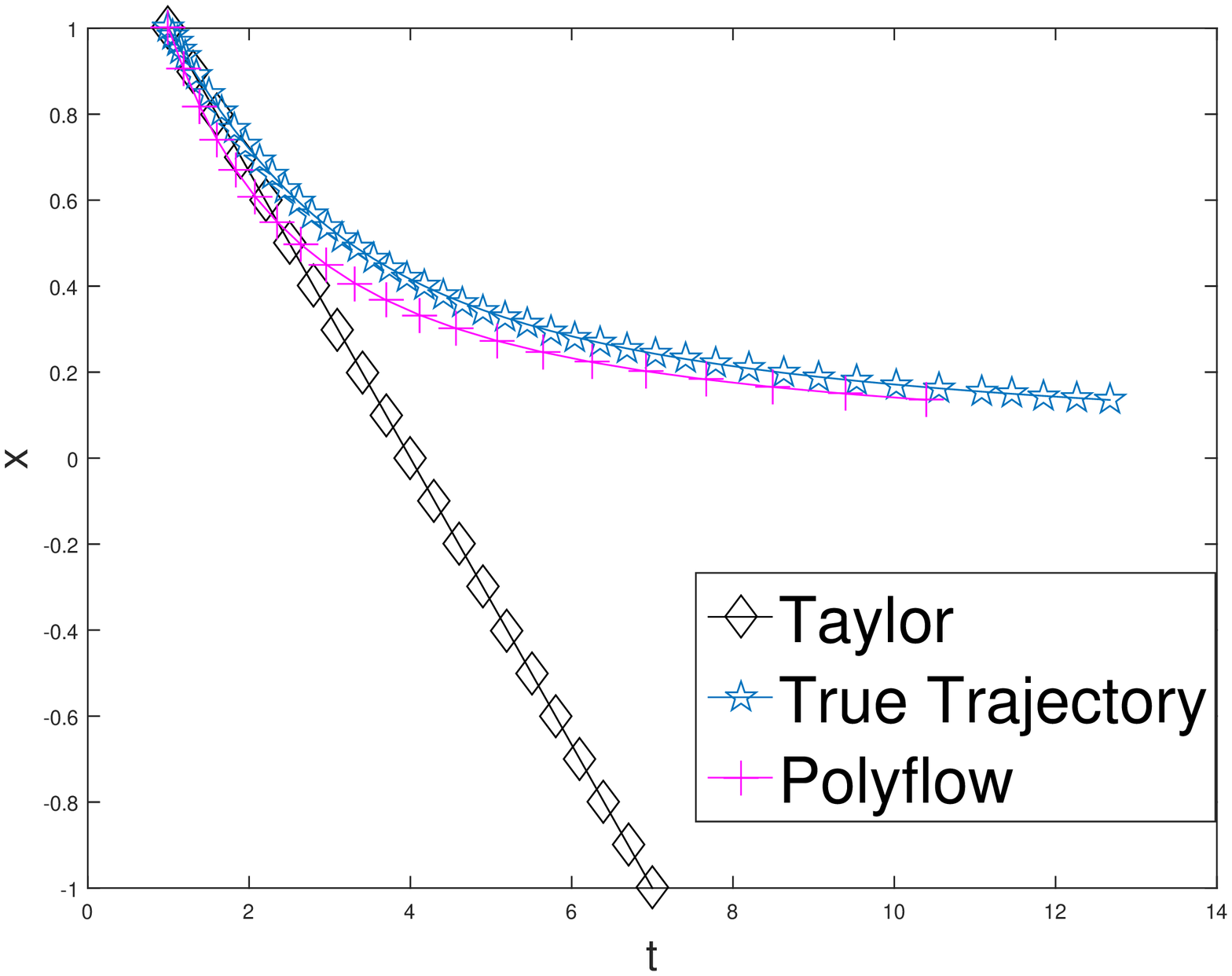}&
\includegraphics[scale = 0.15]{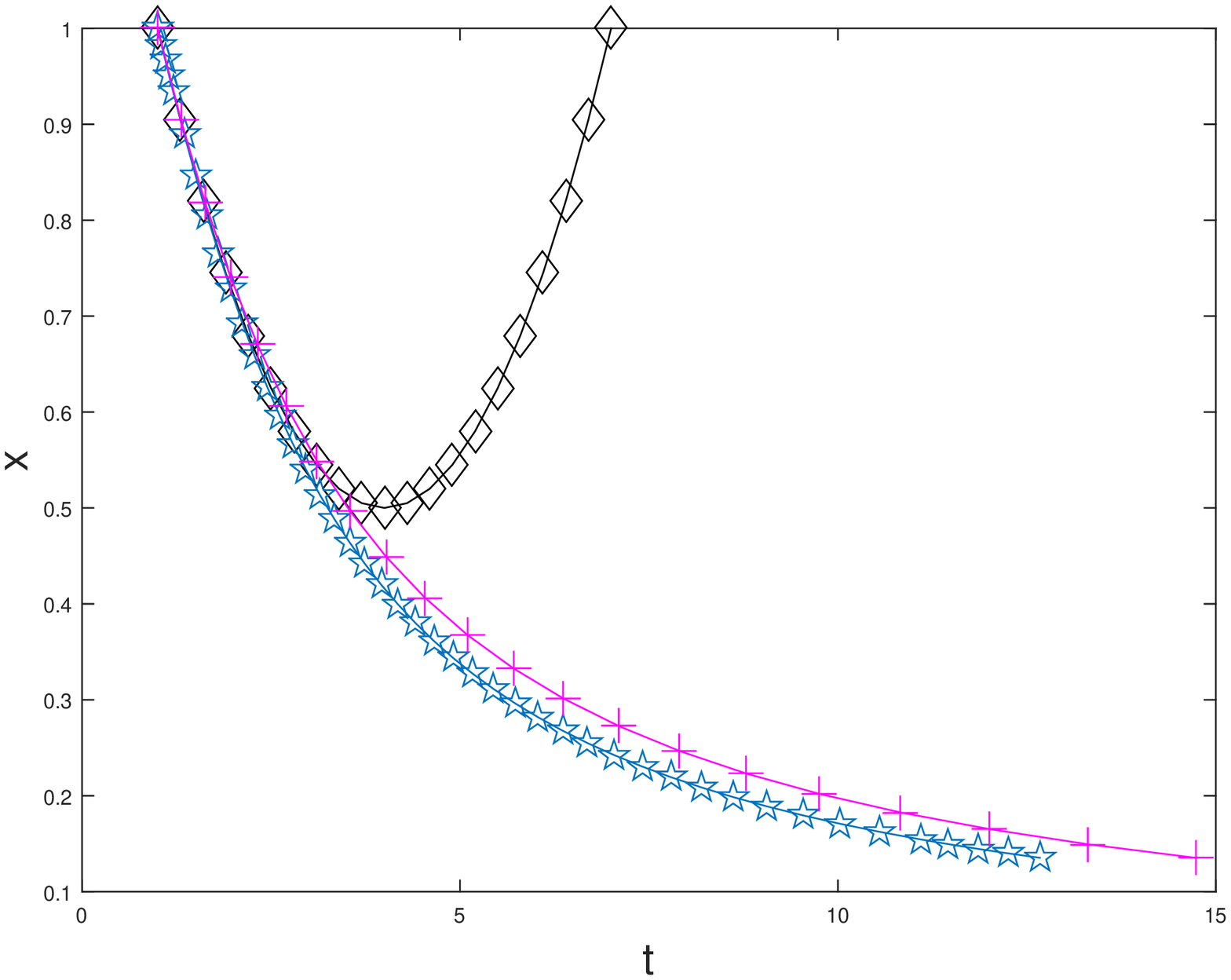} \\N=1&N=2\\ \\
\includegraphics[scale = 0.15]{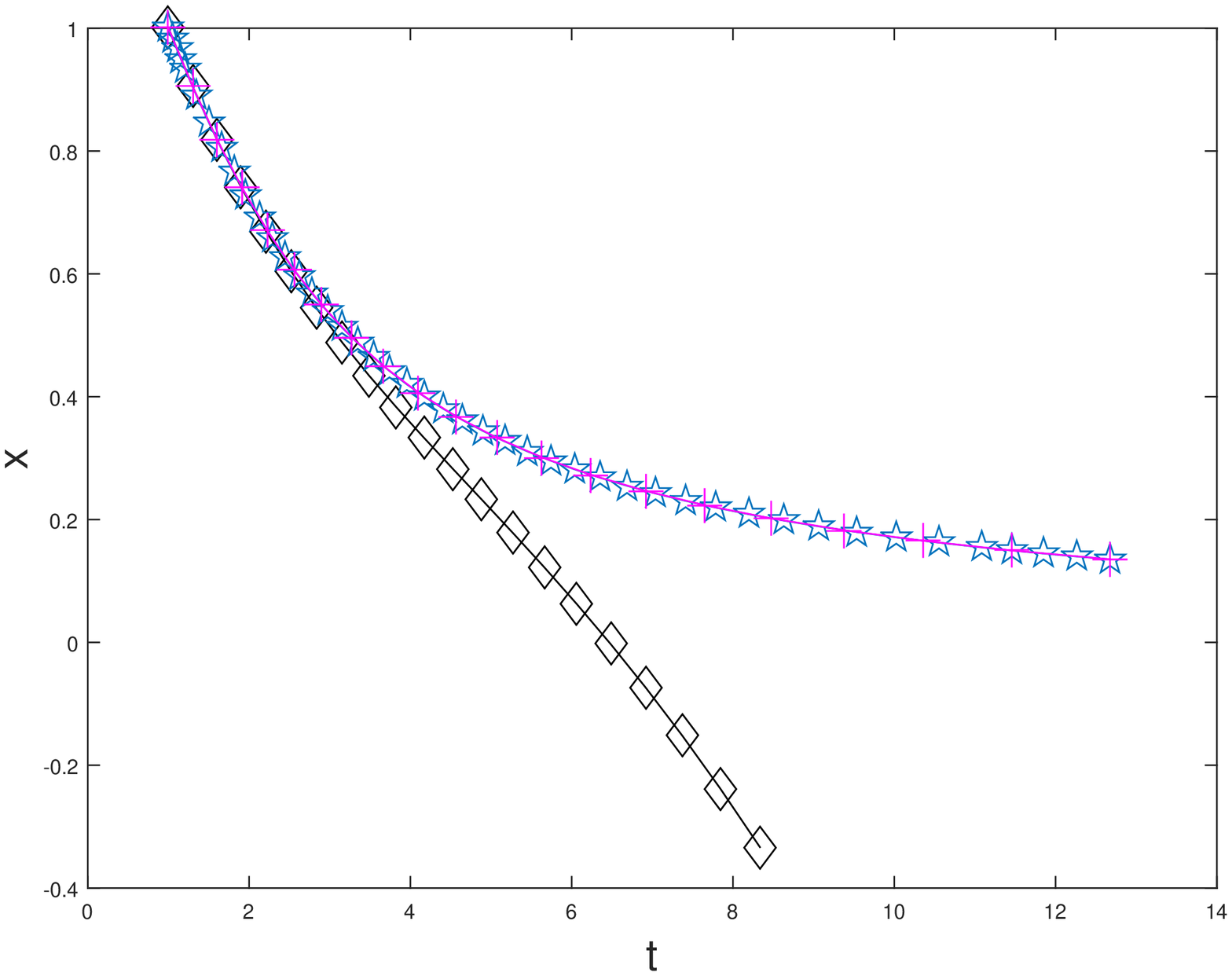}&
\includegraphics[scale = 0.15]{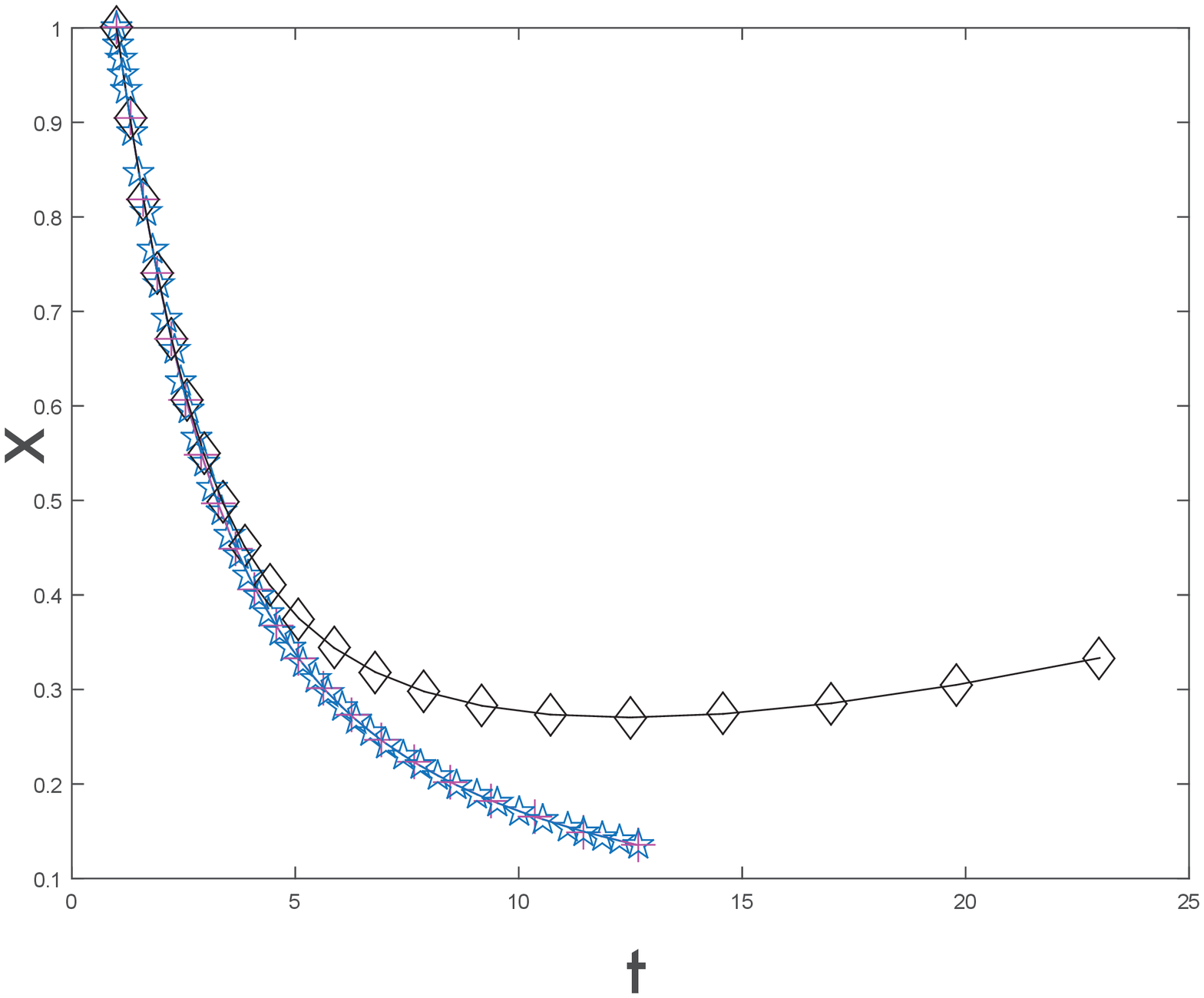}\\N=3 & N=4
\end{tabular}
 \caption{\rmjj{Example \ref{ex2}:} The approximation scheme (with $N=1,2,3,4$) applied on a two-dimensional $4$-polyflow (only the $x_1$-coordinate is represented). The fourth step provides a zero error, because the system is a $4$-polyflow. One can see that even for $N=1,$ our approximation retrieves stability, contrary to the Taylor approximation. As we observed in all our experiments, the approximation performance is already very good for small $N,$ and the error is barely noticeable for $N=3.$}
\label{fig-ex-poly}\end{figure}
\section{Theoretical results}

In this section we provide theoretical arguments showing that our procedure comes with good approximation properties.  We show that our approximation strategy can have performance guarantees that are at least as good as Taylor's approximation, while at the same time encapsulating asymptotic stability. \rmjj{Thus, our solution recovers an asymptotic property (i.e. asymptotic stability), but also satisfies the local properties of the Taylor approximation, \emph{even if the point where the Taylor approximation is done is different from the equilibrium.}} Our proof is an existence proof: we exhibit a theoretical construction for an approximating linear differential equation, which has the above-mentioned good properties.  We emphasize that the construction in the proof is taken as simple as possible and, in particular, does not fully exploit the knowledge of the righthand side $f$ in~\eqref{ode} as proposed in Definition \ref{def:projections}.  Thus, in practice, an optimized choice for the approximating linear differential equation can have an even better behavior than the one theoretically proven here, and indeed we do observe that our approximation outperforms the Taylor approximation in practice (see the examples above, and in Section \ref{sec-num}). At the end of this section we discuss possible extensions.
\subsection{Main result}
%



\begin{thm}\label{thm-stab}
Let $\psi(x_0,t)$ be the solution of~\eqref{ode} with initial condition $x_0$ at time $t=0$, let $\tayl{N}(x_0,t)$ be the Taylor approximation of order $N$ of $\psi(x_0,t)$, and let $R(x_0)$ be its radius of convergence. There exists a sequence of $N$-polyflow approximations $\pol{N}$ satisfying:
\begin{equation}\label{eq-mainthm-eq1}
\forall N\in\nat,\quad \lim_{t\rightarrow \infty} \pol{N}(x_0,t)=0, 
\end{equation} and such that for any initial condition $x_0\in\re^n$ and for any $t\in ]-R(x_0),R(x_0)[$ the successive approximations of the solution tend pointwise to the true solution, that is: 
\begin{equation}\label{eq-mainthm-eq2}
\forall t\in ]-R(x_0),R(x_0)[,\quad \lim_{N\rightarrow \infty} \pol{N}(x_0,t)=\psi(x_0,t). \end{equation}
\end{thm}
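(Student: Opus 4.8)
The plan is to prove the theorem by an explicit construction, exploiting the fact that Definition~\ref{def:polyflow-approx} constrains the coefficients $\Lambda_i$ only through the requirement that $\sum_{i=0}^{N-1}\Lambda_i\lieder{i}{f}1_K$ be \emph{some} approximation of $\lieder{N}{f}1_K$, so I am free to choose them (as the discussion preceding the statement notes, this makes the construction deliberately simple and far from optimal, since it does not exploit $f$). The key observation I would use is that, for any $\epsilon>0$, the functions $t\mapsto e^{-\epsilon t}p(t)$ with $p$ an $\re^n$-valued polynomial of degree $\le N-1$ are precisely the solutions of the order-$N$ linear differential equation whose characteristic polynomial is $(s+\epsilon)^N$; written in the block-companion form~\eqref{eq:polyflow-linearform} this is the block-diagonal choice $\Lambda_i=-\binom{N}{i}\,\epsilon^{N-i}I_n$. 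I would therefore fix any sequence $\epsilon_N>0$ with $\epsilon_N\to 0$ (for instance $\epsilon_N=1/N$), take $\Lambda_i=-\binom{N}{i}\epsilon_N^{N-i}I_n$, and keep the initial conditions~\eqref{eq-init-cond}; this defines the $N$-polyflow approximation $\pol{N}$.

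With this choice the block-companion system splits into $n$ decoupled scalar equations, each of order $N$ with characteristic polynomial $(s+\epsilon_N)^N$, hence Hurwitz, and the solution with the initial conditions~\eqref{eq-init-cond} is $\pol{N}(x_0,t)=e^{-\epsilon_N t}q_N(x_0,t)$, where $q_N(x_0,\cdot)$ is the unique polynomial of degree $\le N-1$ such that $e^{-\epsilon_N t}q_N(x_0,t)$ and $\psi(x_0,t)$ have the same Taylor polynomial of order $N-1$ at $t=0$ — equivalently, $q_N(x_0,\cdot)$ is the truncation to degree $\le N-1$ of $e^{\epsilon_N t}\sum_{i=0}^{N-1}\frac{t^i}{i!}\lieder{i}{f}1_{\re^n}(x_0)$, the discarded terms being $O(t^N)$ and therefore not affecting the first $N$ Taylor coefficients after multiplication by $e^{-\epsilon_N t}$. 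Claim~\eqref{eq-mainthm-eq1} is then immediate, since $q_N(x_0,\cdot)$ is a polynomial and $\epsilon_N>0$, so $\pol{N}(x_0,t)\to 0$ as $t\to\infty$ for every $x_0$.

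For claim~\eqref{eq-mainthm-eq2} I would write $a_i:=\frac{1}{i!}\lieder{i}{f}1_{\re^n}(x_0)$, so that the order-$(N-1)$ Taylor polynomial is $\tau_{N-1}(x_0,t)=\sum_{i=0}^{N-1}a_i t^i$, and expand the truncation to get $q_N(x_0,t)-\tau_{N-1}(x_0,t)=\sum_{i=0}^{N-2}a_i t^i\sum_{l=1}^{N-1-i}\frac{(\epsilon_N t)^l}{l!}$, whence, in the $1$-norm, $\bigl|q_N(x_0,t)-\tau_{N-1}(x_0,t)\bigr|\le\bigl(e^{\epsilon_N|t|}-1\bigr)\sum_{i\ge 0}|a_i|\,|t|^i$. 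By the Hadamard formula~\eqref{hadamard}, $\limsup_i|a_i|^{1/i}=R(x_0)^{-1}$, so for every $t\in\,]-R(x_0),R(x_0)[$ the last series converges (root test) to a finite quantity not depending on $N$; since $\epsilon_N\to 0$, the bound tends to $0$, giving $q_N(x_0,t)-\tau_{N-1}(x_0,t)\to 0$. Combining this with the fact (recalled after the definition of the Taylor approximation) that $\tau_{N-1}(x_0,t)\to\psi(x_0,t)$ on $]-R(x_0),R(x_0)[$, and with $e^{-\epsilon_N t}\to 1$ at fixed $t$, yields $\pol{N}(x_0,t)\to\psi(x_0,t)$.

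In my view the only real difficulty is finding the construction: one needs, for each $N$, an $N$-dimensional linear system that is simultaneously asymptotically stable, matches the first $N$ Taylor coefficients of $\psi$ at an \emph{arbitrary} base point $x_0$, and whose solutions still converge on the whole Taylor disk — and the damping factor $e^{-\epsilon_N t}$ with $\epsilon_N\downarrow 0$ delivers all three at once, precisely because $\{e^{-\epsilon t}p(t):\deg p<N\}$ is already in polyflow form. Everything else is bookkeeping; the single point requiring a little care is that the $1$-norm appearing in the Hadamard formula dominates each coordinate, so every per-coordinate Taylor series converges on $]-R(x_0),R(x_0)[$ and the scalar estimate above applies uniformly across the $n$ components.
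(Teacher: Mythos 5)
Your proposal is correct, and its construction is in fact the same one the paper uses: companion-form blocks $\Lambda_i$ that are scalar multiples of the identity chosen so that the characteristic polynomial is $(s+\epsilon)^N$ (all eigenvalues at $-\epsilon$), with the initial conditions \eqref{eq-init-cond}, so stability \eqref{eq-mainthm-eq1} is immediate in both arguments. Where you genuinely diverge is in the proof of \eqref{eq-mainthm-eq2}. The paper expands $\pi_N$ as a power series at $t=0$, notes that the first $N$ coefficients coincide with those of $\psi$, and controls the tail coefficients through Proposition~\ref{prop-lie-derivatives} (the derivatives of a linear system satisfy a length-$N$ recurrence) combined with the growth bound of Proposition~\ref{prop-recurrence-bound} and the Hadamard estimate, ending with an explicit tail bound $C\,(t/(t+\epsilon))^{N-1}e^{Lt}$; this keeps the decay rate $\epsilon$ fixed across $N$ (a uniform stability margin) and yields a quantitative, essentially geometric, convergence rate on $]-R(x_0),R(x_0)[$. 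You instead exploit the closed form of the chosen systems: the solution is exactly $e^{-\epsilon_N t}q_N(t)$ with $q_N$ the degree-$(N-1)$ truncation of $e^{\epsilon_N t}\tau_{N-1}$, and you compare $q_N$ with $\tau_{N-1}$ directly, using only the convergence of $\sum_i |a_i|\,|t|^i$ inside the Hadamard radius and $\epsilon_N\to 0$. This is more elementary (it bypasses both propositions and the tail estimate), handles negative $t$ with no extra care, and makes the mechanism transparent; the price is that your decay rates $\epsilon_N$ must vanish as $N\to\infty$, so unlike the paper you do not get a stability margin or a convergence rate uniform in $N$ — which the theorem does not require, so your argument fully establishes the statement. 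Both proofs rely on the same tacit assumption, stated after the definition of the Taylor approximation, that the Taylor series converges to $\psi$ inside its radius of convergence, and both interpret Definition~\ref{def:polyflow-approx} loosely by picking $\Lambda_i$ independent of $f$, exactly as the paper's own discussion anticipates.
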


Although Theorem~\ref{thm-stab} does not require the existence of an asymptotically stable equilibrium, this is one of the scenarios where it will be most useful since the approximating polyflows $\pi_N$ are guaranteed to satisfy~\eqref{eq-mainthm-eq1}.

In the proof of the theorem, we express the approximating trajectory, solution of the polyflow approximation, as a Laurent series.  Then, since the first $N$ derivatives of this trajectory at $t=0$ are equal to those of the Taylor polynomial, one observes that convergence of the polyflow approximations to the true value in the radius of convergence, is equivalent to convergence of the remainder to zero. Then, one only needs to bound the larger derivatives at $t=0$ of the polyflow approximation (i.e., of order larger than $N$); \rmjj{this is done by expressing these derivatives as} the solution of a recurrence equation.	We start by recalling a few technical results.


\begin{prop}\label{prop-recurrence-bound}
Given a recurrence equation in $\re^m$:
\begin{equation}\label{eq-recurrence} 
X_{n}=\sum_{0\leq i\leq N-1} k_i X_{n-N+i} 
\end{equation} 
where  $k_i\in \re^{m \times m},$ such that every entry in the matrices $k_i$ has absolute value bounded by $K\in \re_+,$ one has\footnote{If $X_n \in \re^m$ are vectors (i.e., $m\slt 1$) we use $|X|:=\sum_{1\leq j\leq m}|X(j)|$.}: $$\forall n\geq N-1,\ |X_{n}| \leq  (mK+1)^{n-N+1}\sum_{0\leq i\leq N-1}{|X_i|}. $$
\end{prop}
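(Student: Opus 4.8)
The plan is to prove the bound by strong induction on $n \geq N-1$, treating the scalar-majorant quantity $s_n := |X_n|$ (with the $1$-norm convention from the footnote) and showing it is dominated by the solution of a simpler scalar recurrence. First I would observe that the base cases are the indices $n = 0, 1, \dots, N-1$: for these the claimed inequality reads $|X_n| \leq (mK+1)^{n-N+1}\sum_{0\leq i\leq N-1}|X_i|$, and since $n - N + 1 \leq 0$ while $mK+1 \geq 1$, the factor $(mK+1)^{n-N+1} \leq 1$; on the other hand $|X_n| \leq \sum_{0\leq i\leq N-1}|X_i|$ trivially because $X_n$ is one of the summands and all terms are nonnegative. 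Wait — I should be careful: the claim is stated for $n \geq N-1$, so the genuine base case is just $n = N-1$, where the exponent is $0$ and the inequality $|X_{N-1}| \leq \sum_{0\leq i \leq N-1}|X_i|$ is immediate. The cleaner route, though, is to also record the bound $|X_i| \leq \sum_{0 \leq j \leq N-1}|X_j|$ for all $i \in \{0,\dots,N-1\}$ as a starting observation, since those earlier iterates feed into the recurrence.

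Next I would carry out the inductive step. Fix $n \geq N$ and assume $|X_k| \leq (mK+1)^{\max(k-N+1,\,0)}\,\Sigma$ for all $N-1 \leq k < n$, where $\Sigma := \sum_{0\leq i\leq N-1}|X_i|$, together with $|X_k| \leq \Sigma$ for $0 \leq k \leq N-1$. From~\eqref{eq-recurrence}, the triangle inequality, and the entrywise bound $|k_i(a,b)| \leq K$, each component satisfies $|X_n(a)| \leq \sum_{0\leq i\leq N-1}\sum_{1\leq b\leq m} K\,|X_{n-N+i}(b)| = K\sum_{0\leq i\leq N-1}|X_{n-N+i}|$, hence summing over $a$ gives $|X_n| \leq mK\sum_{0\leq i\leq N-1}|X_{n-N+i}|$. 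The indices $n-N+i$ for $i \in \{0,\dots,N-1\}$ range over $\{n-N, \dots, n-1\}$, all strictly less than $n$ and all at least $n-N \geq 0$, so the induction hypothesis applies to each: $|X_{n-N+i}| \leq (mK+1)^{\max(n-N+i-N+1,0)}\Sigma \leq (mK+1)^{n-1-N+1}\Sigma = (mK+1)^{n-N}\Sigma$, using that $n-N+i-N+1 \leq n-1-N+1$ and $(mK+1) \geq 1$. Therefore $|X_n| \leq mK \cdot N \cdot (mK+1)^{n-N}\Sigma$.

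The remaining gap is that $mKN(mK+1)^{n-N}$ is not obviously bounded by $(mK+1)^{n-N+1} = (mK+1)(mK+1)^{n-N}$, since that would need $mKN \leq mK+1$, which is false for $N \geq 2$. So the naive per-term bound is too lossy and the induction as just set up does not close — this is the main obstacle. The fix is to not bound every $|X_{n-N+i}|$ by the worst (largest) one, but instead to compare with the scalar linear recurrence $u_n = K\sum_{0\leq i \leq N-1}(\text{something})$; more precisely I would define $v_n$ by $v_i = |X_i|$ for $0 \leq i \leq N-1$ and $v_n = mK\sum_{0\leq i\leq N-1} v_{n-N+i}$ for $n \geq N$, prove $|X_n| \leq v_n$ for all $n$ by the same induction (now trivially closing, since $v$ satisfies the recurrence with equality), and then separately prove $v_n \leq (mK+1)^{n-N+1}\Sigma$. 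This last scalar estimate is where the real work sits, and it does go through: one shows by induction that $v_n \leq (mK+1)^{n-N+1}\Sigma$ by noting that the generating-function / characteristic-root comparison gives $v_n \leq C\,\mu^{n}$ where $\mu$ is the unique positive root of $\mu^N = mK(\mu^{N-1}+\cdots+1)$, and $\mu \leq mK+1$ because plugging $\mu = mK+1$ into $\mu^N - mK(\mu^{N-1}+\cdots+1) = \mu^N - mK\frac{\mu^N-1}{\mu-1}$ and using $\mu - 1 = mK$ yields $\mu^N - (\mu^N-1) = 1 > 0$, so the left side is positive at $mK+1$, forcing the root below it. A cleaner elementary alternative avoiding characteristic roots: prove directly by strong induction that $v_n \leq (mK+1)^{n-N+1}\Sigma$, using in the inductive step the telescoping identity $(mK+1)^{n-N+1} - (mK+1)^{n-N} = mK\,(mK+1)^{n-N}$ and, crucially, the sharper induction hypothesis $v_{n-N+i} \leq (mK+1)^{n-N+i-N+1}\Sigma$ which, when summed against the partial geometric sum $\sum_{i=0}^{N-1}(mK+1)^{i-N}$, produces a factor strictly less than $1$ rather than $N$; I would present this telescoping version since it keeps everything within the paper's elementary toolkit and needs no appeal to root locations.
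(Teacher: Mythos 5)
Your overall plan (majorize $|X_n|$ by a scalar sequence $v_n$ and then bound $v_n$) is sensible, and you correctly diagnosed that the naive per-term estimate loses a factor $N$. However, the inductive step you finally commit to has a genuine gap. Writing $\Sigma:=\sum_{0\leq i\leq N-1}|X_i|$, your telescoping argument invokes the ``sharper induction hypothesis'' $v_{k}\leq (mK+1)^{k-N+1}\Sigma$ at every index $k=n-N+i$ appearing in the recurrence. But whenever $N\leq n\leq 2N-2$, some of these indices satisfy $k<N-1$, where that inequality is not part of the statement being proved and is in general false: take $X_0\neq 0$ and $X_1=\dots=X_{N-1}=0$; then $v_0=\Sigma>(mK+1)^{1-N}\Sigma$ for any $K>0$, $N\geq 2$. (You flagged exactly this restriction in your base-case discussion, but the closing argument ignores it.) Nor can you simply fall back on $v_k\leq\Sigma$ for those early indices: used term by term, this gives, for $N\leq n\leq 2N-2$, the bound $v_n\leq\bigl[mK(2N-1-n)+(mK+1)^{n-N+1}-1\bigr]\Sigma$, which exceeds the target unless $mK(2N-1-n)\leq 1$, so the induction still does not close. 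The characteristic-root variant you sketch has a similar quantitative problem: it yields $v_n\leq C\mu^n$, and for $n$ close to $N$ this need not lie below $(mK+1)^{n-N+1}\Sigma$ (e.g.\ $N=2$, $m=K=1$: $\mu$ is the golden ratio and $\Sigma\mu^2>2\Sigma$), so the constant requires extra care there as well.

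The missing ingredient is to exploit that the first $N$ terms enter the estimate only through their \emph{sum}, not individually. For instance, prove the slightly stronger claim $v_n\leq mK(mK+1)^{n-N}\Sigma$ for all $n\geq N$: in the inductive step, the indices $k\leq N-1$ occurring in the recurrence contribute at most $\Sigma$ in total, while the indices $N\leq k\leq n-1$ contribute at most $\bigl((mK+1)^{n-N}-1\bigr)\Sigma$ by a geometric sum, so the bracket is at most $(mK+1)^{n-N}\Sigma$ and the induction closes; the proposition follows since $mK\leq mK+1$. This is precisely the paper's device in disguise: the paper introduces the cumulative sums $s_n=\sum_{0\leq i\leq n}|X_i|$, notes $|X_n|\leq mK\,s_{n-1}$, hence $s_n\leq(mK+1)s_{n-1}$, and concludes $s_n\leq(mK+1)^{n-N+1}s_{N-1}$, which dominates $|X_n|$. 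Passing to partial sums turns the order-$N$ recurrence into a first-order one and automatically encodes the fact that the initial data matter only through $\Sigma$ --- the step your telescoping argument lacks.
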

\begin{proof}[Sketch]
We first introduce $s_n$ defined by:
$$s_n:=\sum_{0\leq i\leq n}|X_i|.$$ The righthand side in Equation \eqref{eq-recurrence} can be bounded thanks to the following inequalities: 
$$\left\vert\sum_{0\leq i\leq N-1} k_i X_{n+i}\right\vert\leq m\sum_{0\leq i\leq N-1} K |X_{n+i}| \leq mK s_{n+N-1}.$$ 
Substituting in \eqref{eq-recurrence} we obtain: 
$$|X_n| \leq mKs_{n-1}. $$  
Now, $s_n= s_{n-1} + |X_n|$ and thus $s_n \leq L s_{n-1}$ where $L:=mK+1$ for concision. Finally, we have $s_n \leq s_{N-1} L^{n-N+1}$ and the result follows. 
\end{proof}
Our second ingredient is an elementary property of linear differential equations:
\begin{prop} \label{prop-lie-derivatives}
Given a linear differential equation, as in \eqref{eq:polyflow-linearform}, for any $t\geq 0,$ and for any $i\geq N$ the $i$th derivative $\phi^{(i)}(x_0,t)$ of the solution $\phi(x_0,t)$ satisfies: 
$$\phi^{(i)}(x_0,t) = \sum_{0\leq j\leq N-1}{\Lambda_j \phi^{(i-N+j)}(x_0,t)}.$$
\end{prop}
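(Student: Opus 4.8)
The plan is to prove the identity by exploiting the companion-like structure of the matrix $A$ appearing in~\eqref{eq:polyflow-linearform}. Write the state as $Z=(\psi,y_1,\dots,y_{N-1})^\top$, so that $Z$ solves $\dot Z = AZ$ with $A$ block-companion: the first $N-1$ block-rows simply shift, $\dot\psi = y_1$, $\dot y_1 = y_2$, \dots, $\dot y_{N-2}=y_{N-1}$, and the last block-row reads $\dot y_{N-1} = \sum_{j=0}^{N-1}\Lambda_j (\text{$j$-th block of }Z)$. The first observation I would record is that, because of the shift structure, the top block $\phi:=\psi$ satisfies $\phi^{(j)}(x_0,t) = y_j(x_0,t)$ for $j\in\{0,\dots,N-1\}$ (with $y_0:=\psi$); this is an immediate induction on $j$ using the first $N-1$ block equations.

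Next I would establish the base case $i=N$. Differentiating $\phi^{(N-1)} = y_{N-1}$ once more and using the last block-row of the system gives $\phi^{(N)} = \dot y_{N-1} = \sum_{j=0}^{N-1}\Lambda_j y_j = \sum_{j=0}^{N-1}\Lambda_j \phi^{(j)}$, which is exactly the claimed formula for $i=N$ since $i-N+j = j$ in that case. For the inductive step, assume the identity holds for some $i\geq N$. Differentiate both sides with respect to $t$. On the left we get $\phi^{(i+1)}$; on the right, since the $\Lambda_j$ are constant matrices, we get $\sum_{j=0}^{N-1}\Lambda_j \phi^{(i-N+j+1)} = \sum_{j=0}^{N-1}\Lambda_j \phi^{((i+1)-N+j)}$, which is the identity at level $i+1$. (Each term $\phi^{(i-N+j)}$ with $i\geq N$ has order $i-N+j\geq 0$, so differentiation is legitimate; indeed for $i\ge N$ all indices appearing are nonnegative, and $\phi$ is smooth in $t$ because it solves a linear ODE.) This closes the induction and proves the proposition for all $i\geq N$ and all $t\geq 0$.

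There is essentially no hard step here: the only thing to be careful about is the bookkeeping of indices — making sure that $\phi^{(j)}=y_j$ only for $j\le N-1$, that the last row of $A$ is used precisely once (to pass from $N-1$ to $N$), and that in the inductive step the shifted indices $(i+1)-N+j$ stay in range. The mild subtlety worth a sentence is why one may differentiate the identity term-by-term: this holds because the solution $\phi(x_0,\cdot)$ of a linear differential equation is real-analytic (in particular $C^\infty$) in $t$, so all the derivatives involved exist and the standard rules apply. An alternative, coordinate-free route would be to note that $Z = e^{tA}Z(0)$ and that $A$ satisfies its characteristic-type relation $A^N = \sum_{j=0}^{N-1}\Lambda_j A^{j}$ at the level of the companion action, but the direct induction above is cleaner and self-contained, so that is the one I would write out.
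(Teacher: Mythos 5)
Your proof is correct: the identification $\phi^{(j)}=y_j$ for $j\le N-1$ via the shift rows, the base case $i=N$ from the last block-row, and the induction by differentiating (legitimate since solutions of linear ODEs are smooth) together give exactly the stated recurrence. The paper itself omits the argument as ``immediate,'' and what you wrote is precisely the standard computation the authors have in mind, so there is nothing to reconcile.
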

\rmjj{The proof is immediate and we omit it.}
We are now in position to prove Theorem \ref{thm-stab}:

\begin{proof}[Proof of Theorem \ref{thm-stab}]\\
We first define our sequence of $N$-polyflow approximations (recall that $n$ is the dimension of the initial differential equation~\eqref{ode}): \\
At each step $N,$ we define the coefficients $\Lambda_{N,i}\in \re^{n \times n},$ $i\in\{ 0,N-1\}$ of the approximating linear differential equation so that the eigenvalues have negative real parts, and the $\Lambda_{N,i}$ are bounded by a uniform constant $K$ independent \rmjj{of} $N.$ \\
This is easily done, e.g., by choosing $\Lambda_{N,i}$ to be multiples of the identity $k_iI,$ and fixing all eigenvalues equal to a constant $-\epsilon,$ for $\epsilon$ small enough. One then computes $k_i$ by identifying them with the coefficients of the characteristic polynomial $(\lambda+\epsilon)^N.$\\ Since the eigenvalues are explicitly chosen with negative real parts, it is clear that the system is asymptotically stable. Thus, the first part of the theorem (equation \eqref{eq-mainthm-eq1}) is easily satisfied.

We now prove equation \eqref{eq-mainthm-eq2}, that is, the sequence of approximations converges in a region as large as the radius of convergence of the Taylor approximation.
 Let us fix  \mbox{$x_0 \in \re^n$} and a time $t\in[0,R(x_0)[$ (for simplicity we consider positive times only). First, we remark that the solution of the $N$-polyflow approximating system, satisfies:

\begin{eqnarray}  \pi_N(x_0,t)&=& \sum_{i=0}^\infty \pi_N^{(i)}(x_0,0) \frac{t^i}{i!}\\
\label{eqn-taylor-polyflow} &=& \sum_{i=0}^{N-1} \psi^{(i)}(x_0,0) \frac{t^i}{i!} +\sum_{i=N}^\infty \pi_N^{(i)}(x_0,0) \frac{t^i}{i!}.
\end{eqnarray}

The first term of the above sum tends to $\psi(x_0,t)$ when $N$ tends to infinity (indeed, this first term is precisely the $N$th Taylor approximation, \rmj{since we chose the initial conditions accordingly}).  Thus, we have to prove that the second term tends to zero.

By Proposition \ref{prop-lie-derivatives}, the coefficient $\pi_N^{(i)}(x_0,0) $ in the second term can be obtained via the recurrence relation:

$$\pi_N^{(k)}(x_0,0) = \sum_{i=0}^{N-1} \Lambda_{N,i} \pi_N^{(k-N+i)}(x_0,0).$$  
Thus, applying Proposition \ref{prop-recurrence-bound}, we have that (again, we introduce the constant $L:=nK+1$ for concision):
\begin{equation}\label{eq-bound-taylorcoeffs}
|\pi_N^{(k)}(x_0,0)|\leq K'_N (nK+1)^{k-N+1}:= K'_N L^{k-N+1},
\end{equation} 
where $K$ is a uniform upper bound on the absolute values of the entries of the $\Lambda_{N,i}$ and: 
$$K'_N:=\sum_{0\leq k\leq N-1} |\pi_N^{(k)}(x_0,0)|.$$ 
We now bound the value $K'_N$.

Since $t$ is in the radius of convergence of the Taylor approximations of $\psi(x_0,\cdot),$ by the Hadamard formula \eqref{hadamard}, we have that, for $\epsilon$ small enough, and $k\leq N-1,$ $$|\pi_N^{(k)}(x_0,0)|=|\psi^{(k)}(x_0,0)|\leq C k!/(t+\epsilon)^{k},$$ for some constant $C,$ independent of $N.$  Supposing $N$ large enough, the quantity $k!/(t+\epsilon)^k,\ k\leq N-1$ is bounded by its value for $k=N-1.$ Thus, the sum $K_N'$ can be bounded as follows:
\begin{eqnarray} K_N'&=&\sum_{0\leq k\leq N-1}|\pi_N^{(k)}(x_0,0)|\\ &\leq & \sum_{0\leq k\leq N-1} C k!/(t+\epsilon)^{k}\\ &\leq& NC (N-1)!/(t+\epsilon)^{N-1} .\end{eqnarray}

Plugging these in Equation \eqref{eq-bound-taylorcoeffs}, 
we obtain the following bound on the coefficients of the Laurent series \eqref{eqn-taylor-polyflow}  $$\forall k, |\pi_N^{(k)}(x_0,0)|\leq C N!/(t+\epsilon)^{(N-1)} L^{k-N+1}. $$

We are now able to finish the proof. The righthand term in Equation \eqref{eqn-taylor-polyflow} can be bounded as follows, for $i\geq N:$
\begin{eqnarray}
\left\vert\pi_N^{(i)}(x_0,0)\right\vert \frac{t^i}{i!}& \leq&  C  \frac{N! L^{i-N+1}t^i}{(t+\epsilon)^{(N-1)}i!}\\ 
&\leq & C \left(\frac{t}{t+\epsilon}\right)^{N-1} \cdot{} \\ 
& &\frac{N!}{i\dots (i-N+2)} \frac{(Lt)^{i-N+1}}{(i-N+1)!}\\ 
&\leq &C \left(\frac{t}{t+\epsilon}\right)^{N-1} \frac{(Lt)^{i-N+1}}{(i-N+1)!}.\end{eqnarray}
Finally:
\begin{equation}\label{eq-sum-thm}
\left\vert\sum_N^\infty \pi_N^{(i)}(x_0,0) \frac{t^i}{i!} \right\vert\leq  C \left(\frac{t}{t+\epsilon}\right)^{N-1} \exp{(Lt)} ,\end{equation} which tends to zero when $N$ tends to $\infty$ and the proof is done.
\end{proof}

\subsection{Discussion}
In fact, in the above theorem, at each step $N,$ we place the $N$ eigenvalues of the polyflow arbitrarily, without leveraging our knowledge of the system.  Obviously, there should be an optimal way of choosing the coefficients, so that the mid-term approximation (i.e., for $R(x_0) \leq t < \infty$) performance is optimized while ensuring that the asymptotic behavior is stable and the successive approximations converge on $[0,R[$. We conjecture that such a procedure would enable convergence, not only on the Taylor radius of convergence, but on the \emph{entire} interval of time $t\in [0,\infty]:$
\begin{conjecture} \label{conj-stab}
Consider the system \eqref{ode}, and suppose that $x=0$ is an asymptotically stable equilibrium.  There exists a way of computing the polyflow coefficients from $f$, on some neighborhood $K$ of $0$ such that, for all $x_0\in K,$ the polyflow approximations satisfy
$$ \forall t\geq 0,\  \lim_{N\rightarrow \infty} {\pol{N}(x_0,t)}=\psi(x_0,t).$$
\end{conjecture}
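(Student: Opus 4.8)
A natural route to the conjecture starts from an exact formula for the approximation error. Since $x=0$ is asymptotically stable, a converse Lyapunov argument provides a compact forward-invariant neighbourhood $K=\{V\le c\}$ of $0$, so every trajectory starting in $K$ stays in $K$ and converges to $0$. Write the $N$-polyflow approximation in the block-companion form~\eqref{eq:polyflow-linearform} with matrix $A_N$ and coefficients $\Lambda_{N,i}$, and let $r_N:=\lieder{N}{f}1_K-\sum_{i=0}^{N-1}\Lambda_{N,i}\lieder{i}{f}1_K$ be the residual of the Lie-derivative approximation. Then the vector $w(t)=(\psi,\dot\psi,\dots,\psi^{(N-1)})(x_0,t)$ of true time derivatives solves $\dot w=A_Nw+E_N$ with $E_N(t)=(0,\dots,0,r_N(\psi(x_0,t)))^{\top}$ and with $w(0)$ equal to the polyflow's prescribed initial condition, so variation of parameters gives
$$\psi(x_0,t)-\pol{N}(x_0,t)=\int_0^t G_N(t-s)\,r_N(\psi(x_0,s))\,ds,$$
where $G_N(\tau)$ is the block of $e^{A_N\tau}$ mapping the last coordinate block onto the first; it has a zero of order $N-1$ at $\tau=0$, namely $G_N(\tau)=\tfrac{\tau^{N-1}}{(N-1)!}I+O(\tau^N)$, and equals a scalar Green's function times $I$ when the $\Lambda_{N,i}$ are scalar multiples of $I$. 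Because the trajectory stays in $K$, the conjecture reduces to making
$$\Big(\sup_{x\in K}|r_N(x)|\Big)\cdot\int_0^t\|G_N(\tau)\|\,d\tau\longrightarrow 0\qquad\text{for every fixed }t\ge 0,$$
and if $A_N$ is chosen Hurwitz one may replace $\int_0^t$ by $I_N:=\int_0^\infty\|G_N(\tau)\|\,d\tau$ and get convergence uniform in $t$. Convergence on $[0,R(x_0)[$ is then free, since, as in Theorem~\ref{thm-stab}, the first $N$ Taylor coefficients of $\pol{N}$ already match those of $\psi$.

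So everything comes down to choosing the $\Lambda_{N,i}$ so that the residual on $K$ shrinks faster than the polyflow propagator grows, and the difficulty is that these two quantities pull against each other. Hand-picking a well-conditioned Hurwitz $A_N$ (all eigenvalues at a single $-\epsilon$, as in the proof of Theorem~\ref{thm-stab}) gives $I_N=\epsilon^{-N}$ but leaves $\sup_K|r_N|$ uncontrolled — which is exactly why that argument only reaches $R(x_0)$; conversely, the best $L^\infty(K)$-projection of Definition~\ref{def:projections}(\ref{def:approx-1}) minimizes $\sup_K|r_N|$ but gives an $A_N$ whose spectrum and conditioning we cannot control, and whose "apparent radius" $\big(\sup_K|r_N|/N!\big)^{-1/N}$, in examples, is strictly larger than $R(x_0)$ but still finite, so projection alone is not enough for all $t$. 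One can also force $r_N$ to vanish to order $m\to\infty$ at $0$ — this is feasible because the degree-$\le m$ truncation of the Lie-derivative operator is linear on a finite-dimensional space of vector polynomials, so Cayley–Hamilton supplies the matching once $N$ exceeds that dimension — and then use that the trajectory converges to $0$; but the coefficients realizing high-order matching grow with $m$, so $\|A_N\|$ (hence $I_N$) blows up and the gain is lost. Reconciling a small residual with a controlled propagator, \emph{uniformly in $N$}, is the crux, and this is precisely the step we have not managed to carry out — hence the conjecture rather than a theorem.

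The shape of the argument I would ultimately aim for is a density statement: near a hyperbolic sink $f$ is a small $C^\infty$ perturbation of its linearization $T_0f$, which is itself a $1$-polyflow, so one expects a sequence of polyflows — for instance via the projection onto the polyflow manifold of Definition~\ref{def:projections}(\ref{def:approx-1.bis}) — converging to $f$ finely enough that the flows converge on compact time intervals, while being representable by matrices $A_N$ whose propagators $\int_0^t\|e^{A_N(t-s)}\|\,ds$ stay bounded (for each fixed $t$) as $N\to\infty$. Plugging such a sequence into the error formula closes the proof. I expect the construction of $A_N$ with both properties at once to be the main obstacle, and it will likely require exploiting the large slack available when $N$ is much bigger than the order of matching, rather than the naive minimal-order construction.

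Finally, in the special case where $f$ is real-analytic and $T_0f$ has no resonant eigenvalues, Poincaré's (or Sternberg's) linearization theorem conjugates~\eqref{ode} near $0$ to its linearization, so the system genuinely is a $1$-polyflow on a neighbourhood and the conjecture becomes very plausible; what would remain is to show that the Lie-derivative-tower polyflows of Definition~\ref{def:polyflow-approx} actually detect this hidden linear structure, and then to deal with resonances and with merely-$C^\infty$ vector fields, for which no such normal form exists.
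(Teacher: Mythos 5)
You should be aware that the statement you were asked to prove is not a theorem in the paper at all: it is stated and left as an open conjecture (Conjecture~\ref{conj-stab}), with only heuristic remarks following it (that the coefficients should presumably come from the Taylor coefficients of $f$, and that density of exponential sums is not directly usable because the combination coefficients are pinned down by the initial conditions \eqref{eq-init-cond}). So there is no proof in the paper to match, and your own text does not supply one either --- you say so yourself. Concretely, the gap is the one you identify: after your (correct) variation-of-parameters reduction, which writes
$$\psi(x_0,t)-\pol{N}(x_0,t)=\int_0^t G_N(t-s)\,r_N(\psi(x_0,s))\,ds,\qquad r_N:=\lieder{N}{f}1_K-\sum_{i=0}^{N-1}\Lambda_{N,i}\lieder{i}{f}1_K,$$
everything hinges on exhibiting, for each $N$, coefficients $\Lambda_{N,i}$ for which $\bigl(\sup_{x\in K}|r_N(x)|\bigr)\int_0^t\|G_N(\tau)\|\,d\tau\to 0$ for every fixed $t\ge 0$. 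You show the two extreme choices each fail (fixed spectrum at $-\epsilon$ controls the propagator but not the residual; the $L^\infty(K)$ projection controls the residual but not the propagator), and you do not produce a choice that controls both simultaneously and uniformly in $N$. The closing ``density'' and Poincar\'e/Sternberg paragraphs are programmatic hopes, not arguments: in particular, even when the system is smoothly conjugate to its linearization, the conjugacy need not be of the Lie-derivative-tower form $\bigl(1_{\re^n},\lieder{1}{f}1_{\re^n},\dots\bigr)$, so exact linearizability of $f$ does not by itself imply anything about the behavior of the paper's $N$-polyflow approximations.

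That said, the partial content is sound and is genuinely more quantitative than anything in the paper's discussion: the error representation, the identity $G_N(\tau)=\tfrac{\tau^{N-1}}{(N-1)!}I+O(\tau^{N})$, the bound $\int_0^\infty\|G_N\|\,d\tau=\epsilon^{-N}$ for the repeated-eigenvalue choice, and the observation that matching the first $N$ Taylor coefficients makes convergence on $]-R(x_0),R(x_0)[$ automatic (as in Theorem~\ref{thm-stab}) are all correct, and they isolate the residual-versus-propagator trade-off as the precise obstruction. This is a reasonable framework in which one might eventually attack Conjecture~\ref{conj-stab}, but as it stands it is a reformulation of the open problem, not a resolution of it.
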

We suspect that these coefficients should be based on the Taylor coefficients of $f(x),$ in a similar way as in \cite{krener1984approximate}.  
As a matter of fact, it is well known from approximation theory that linear combinations of exponentials are dense in the sets of functions on a compact~\cite{NAT}, and then one can indeed approximate arbitrarily well any trajectory with a solution of a linear system. However, firstly, in our setting, we do not allow arbitrary coefficients for the linear combinations (these coefficients are determined by the initial conditions as in \eqref{eq-init-cond}) and secondly, we would like to compute the polyflow coefficients \emph{implicitly,} that is, from the sole knowledge of $f$ and not from the  knowledge of the trajectories themselves.

%

\section{Numerical examples}\label{sec-num}
In this section we report numerical examples on one- and two-dimensional systems. 
\subsection{One-dimensional systems}
We observe in practice, for all the stable polynomial systems that we tried, that indeed all the polyflow approximations are stable, provided that we approximate the vector field in a compact set contained in the interior of the basin of convergence. This tends to confirm Conjecture \ref{conj-stab}.  As an example, we represent in Figure \ref{fig-1D} a one dimensional nonlinear (cubic) system and its successive polyflow approximations.  One can see that the approximation works remarkably well, even for small values of $N.$
\begin{figure}
\begin{center}
\includegraphics[scale = 0.3]{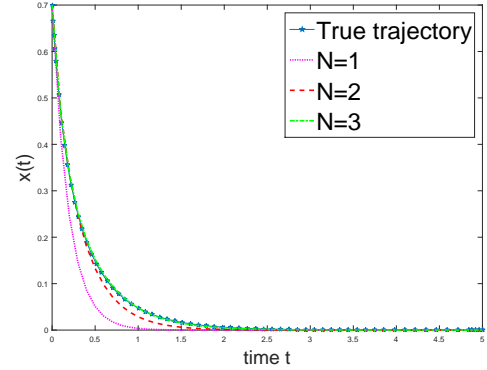} \end{center}
\caption{Successive approximations of the trajectories (as a function of time) of the 1D polynomial system given by $f=-x^3-3x^2-2x$ for $N=1,\hdots,3$. The projection is made on a discretization of the interval $[-0.1,1]$ with a discretization step equal to $0.1.$ Already for $N=3,$ the difference between the trajectory and the approximation is barely noticeable.}
\label{fig-1D}\end{figure}

\subsection{Limit cycles in dimension larger than one}
Our long-term goal is to achieve more complex control tasks with the same kind of approximations, and as a first step beyond stability \rmjj{analysis}, we investigate here the approximability of limit cycles.  This property is quite challenging to reproduce with a linear system, as (nontrivial) limit cycles are numerically unstable for linear systems.  However, we observed that for relatively short time-horizons, the approximations exhibit a cyclic behavior which tends to simulate the limit cycle.  We also observed numerically that, in this case, some eigenvalues of the approximating linear differential equation have a real part significantly close to zero, which is encouraging. We leave a formalization of this observation for further research. 

In Figures \ref{fig:lotka:approx} and \ref{fig:lotka}, we approximated the Lotka-Volterra system with our $N$-polyflow approximation for $N=1,2,5,6.$
\rmjj{More precisely the system is 
\begin{eqnarray}
\label{lotka} \dot x & = & \alpha(x+1)-\beta(x+1)(y+1/2),\\ 
\nonumber \dot y& = &\gamma(x+1)(y+1/2)-\delta(y+1/2),
\end{eqnarray}
       
		with	$ \alpha=2/3;
        \beta=4/3;
        \delta=1;
        \gamma=1.$}	The numerical values are chosen such that a limit cycle exists, which is represented in blue in Figure \ref{fig:lotka}. We observe that the polyflow approximations indeed have an approximate limit cycle (for $n=7,$ probably due to numerical errors, asymptotically the approximation diverges from this cycle).
\begin{figure}
\centering
\begin{tabular}{cc}
\includegraphics[scale = 0.2]{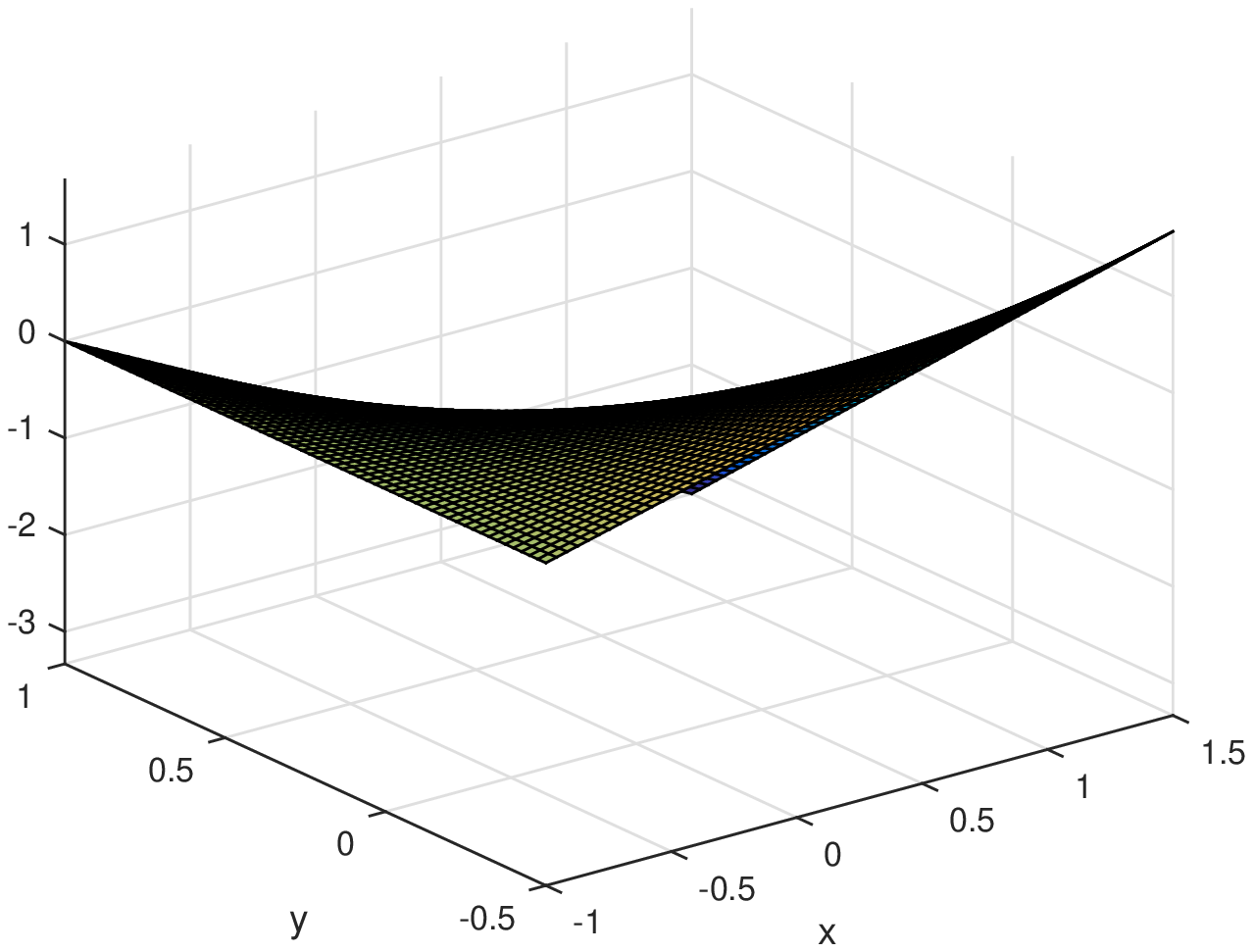}&
\includegraphics[scale = 0.2]{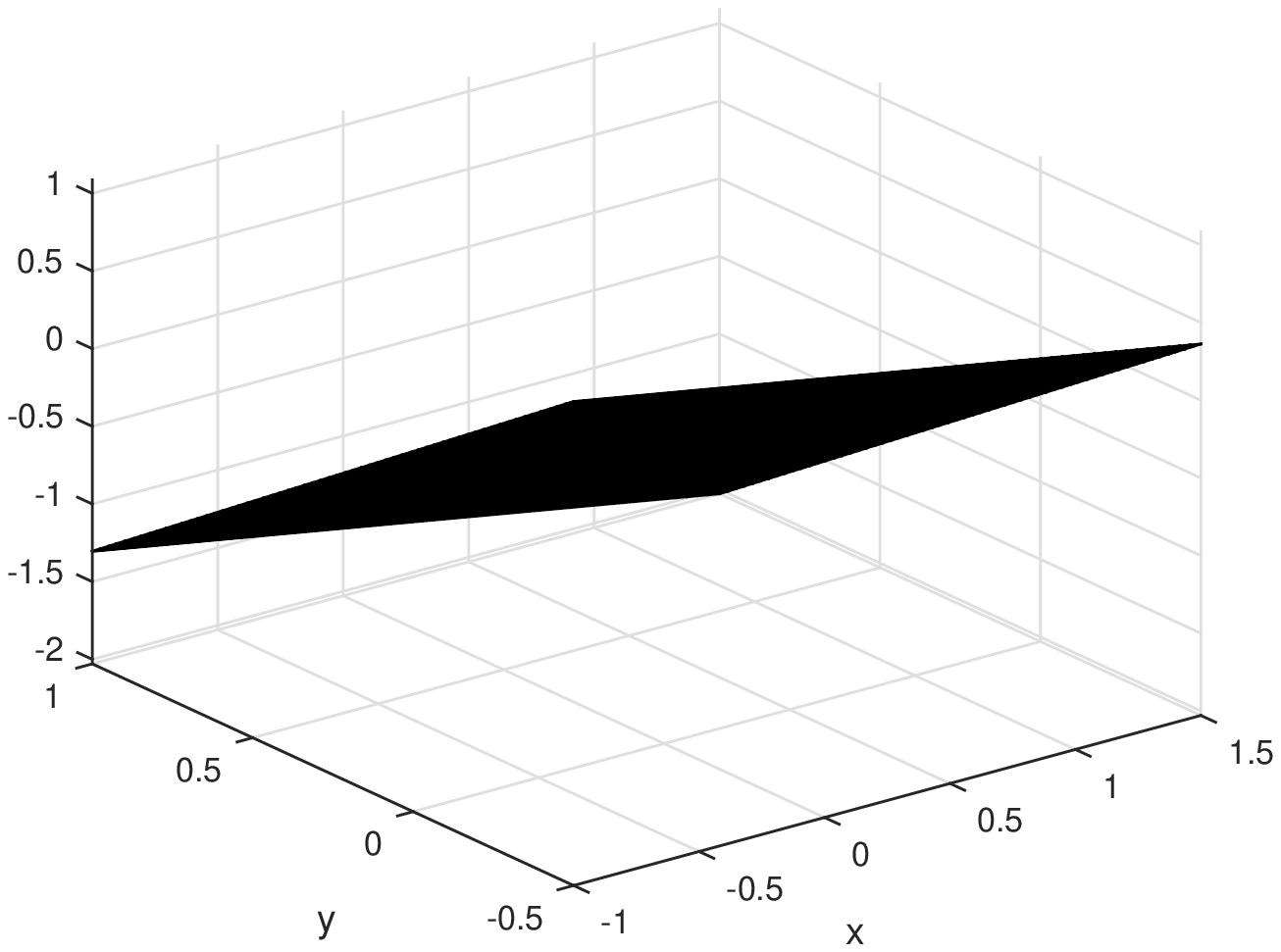} \\$\lieder{1}{f}x_1$ & projection on $\spann_{\re}{\{x_1,x_2\}}$\\ \\
\includegraphics[scale = 0.2]{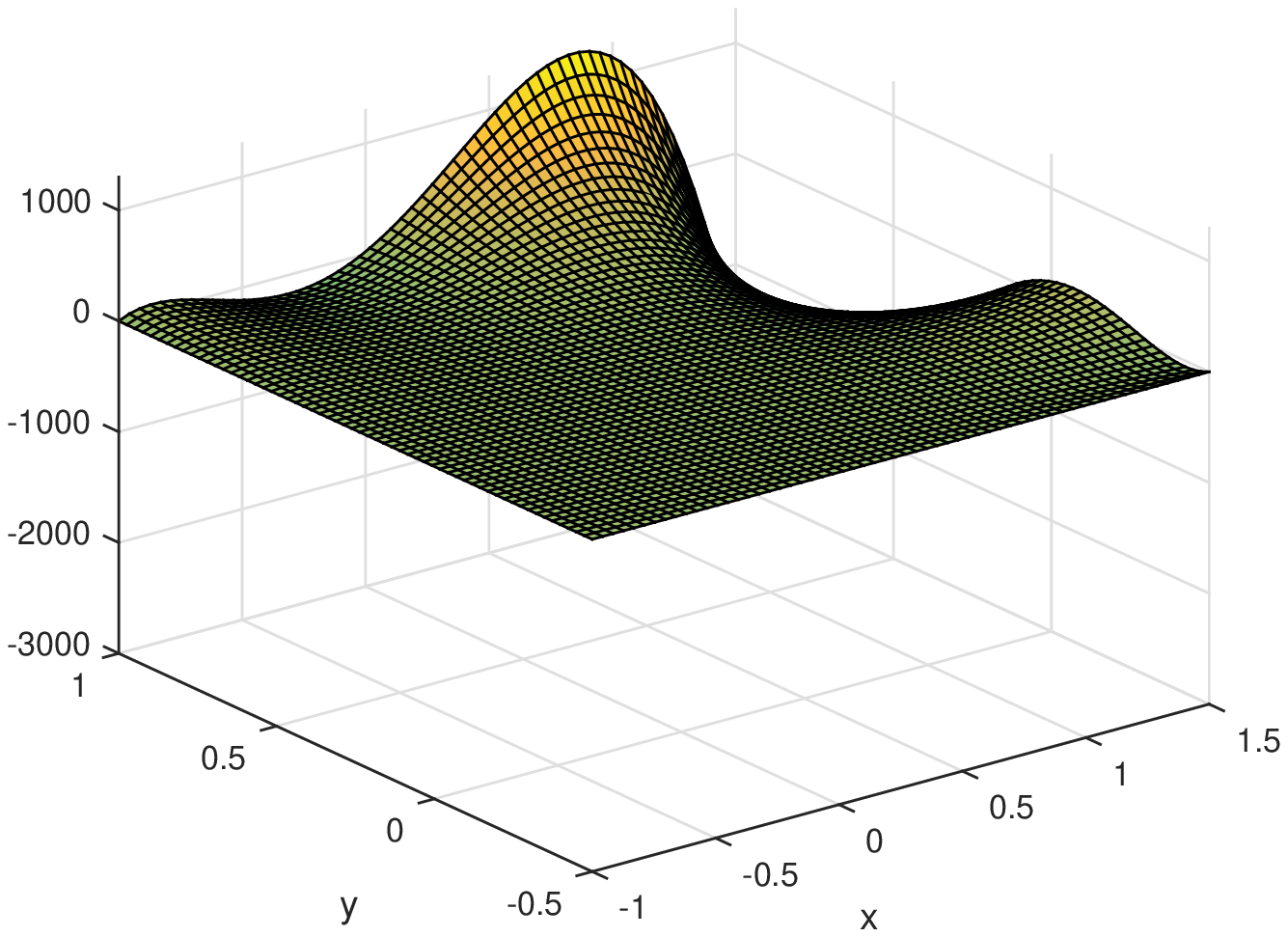}&
\includegraphics[scale = 0.2]{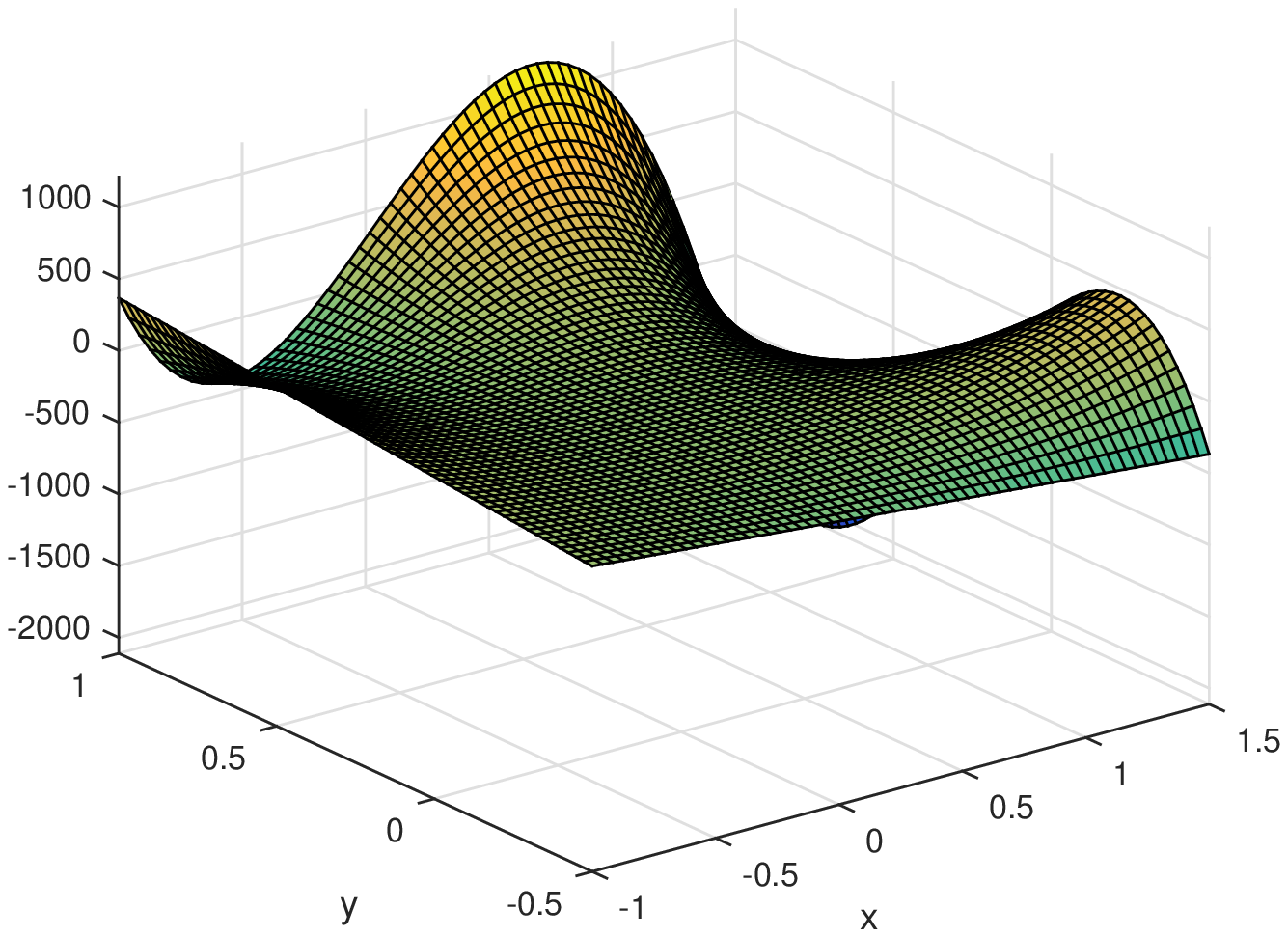}\\$\lieder{6}{f}x_1$ & projection on lower derivatives
\end{tabular}
 \caption{The approximation of the $N$th Lie derivative of one particular coordinate, for $N=1$ and $N=6$.}
\label{fig:lotka:approx}\end{figure}

\begin{figure}
\centering
\begin{tabular}{cc}
\includegraphics[scale = 0.17]{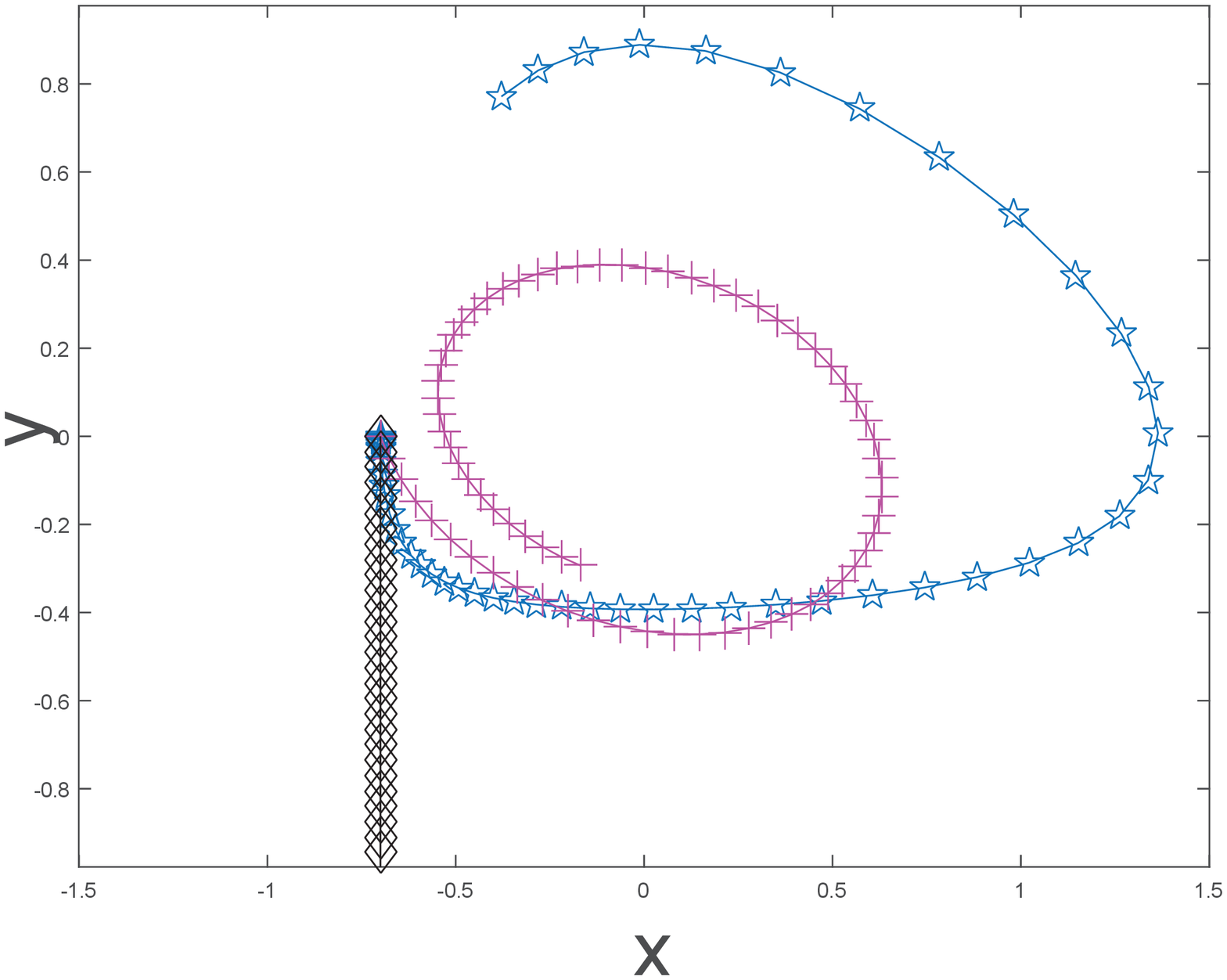}&
\includegraphics[scale = 0.17]{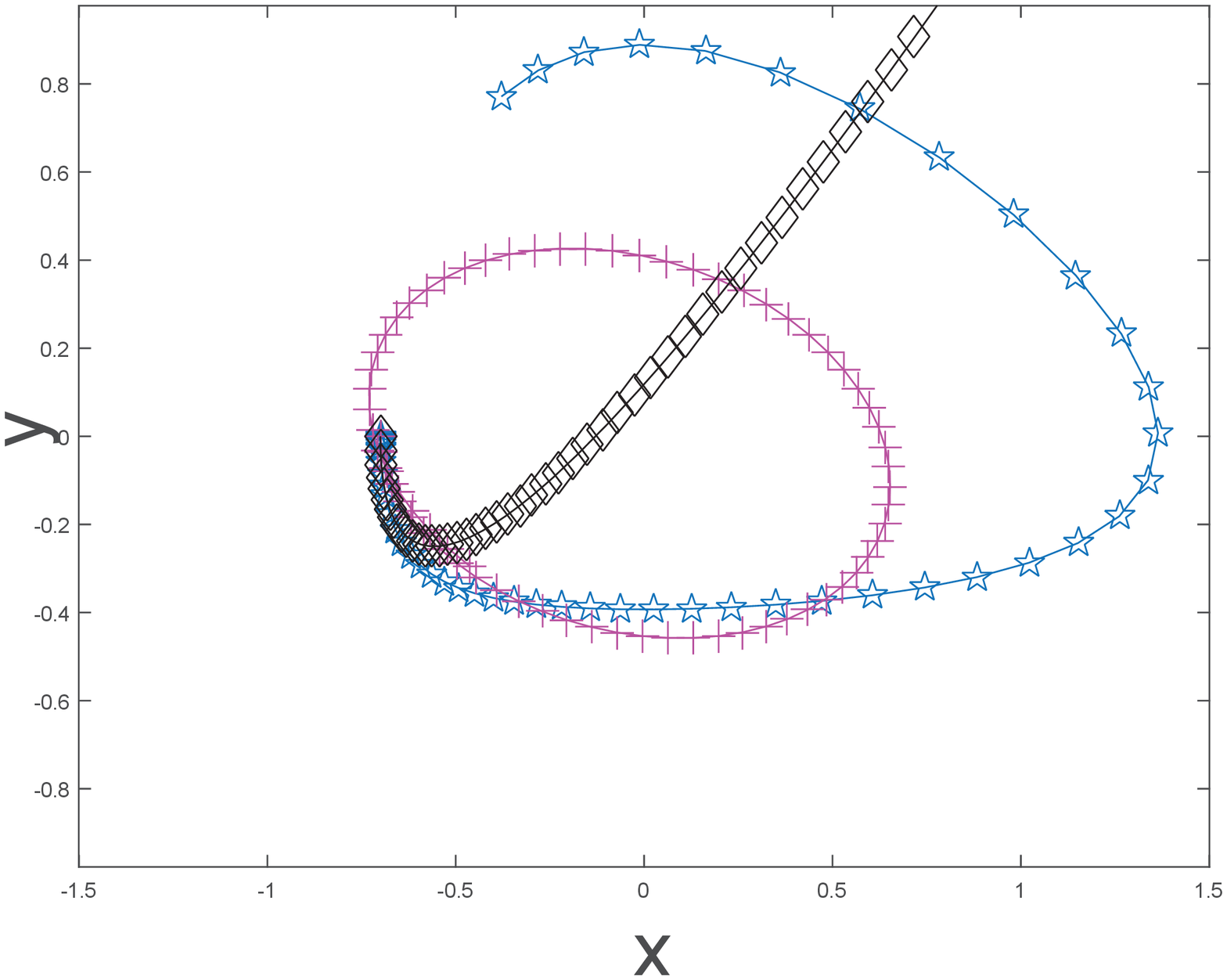} \\(a)&(b)\\ \\
\includegraphics[scale = 0.17]{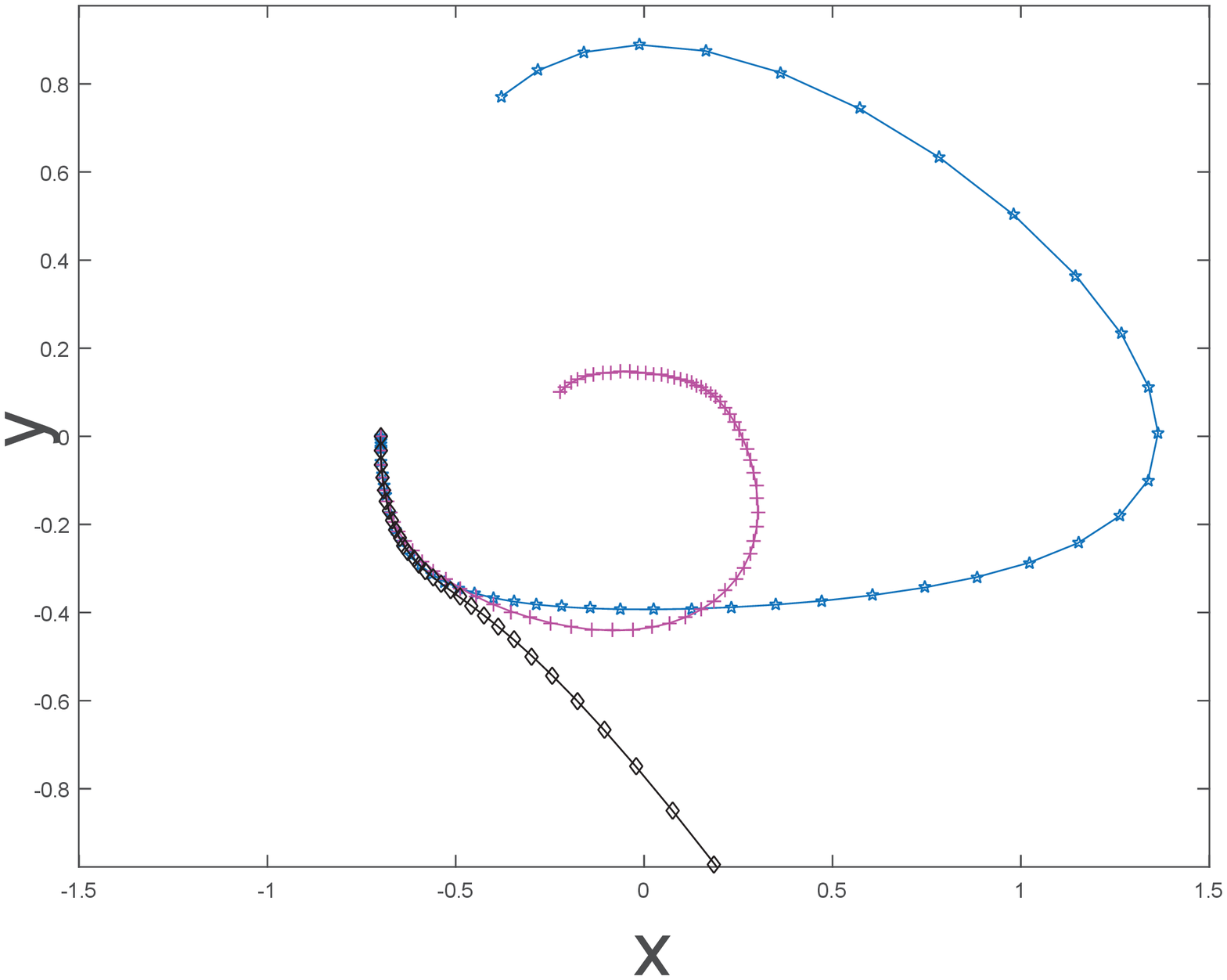}&
\includegraphics[scale = 0.17]{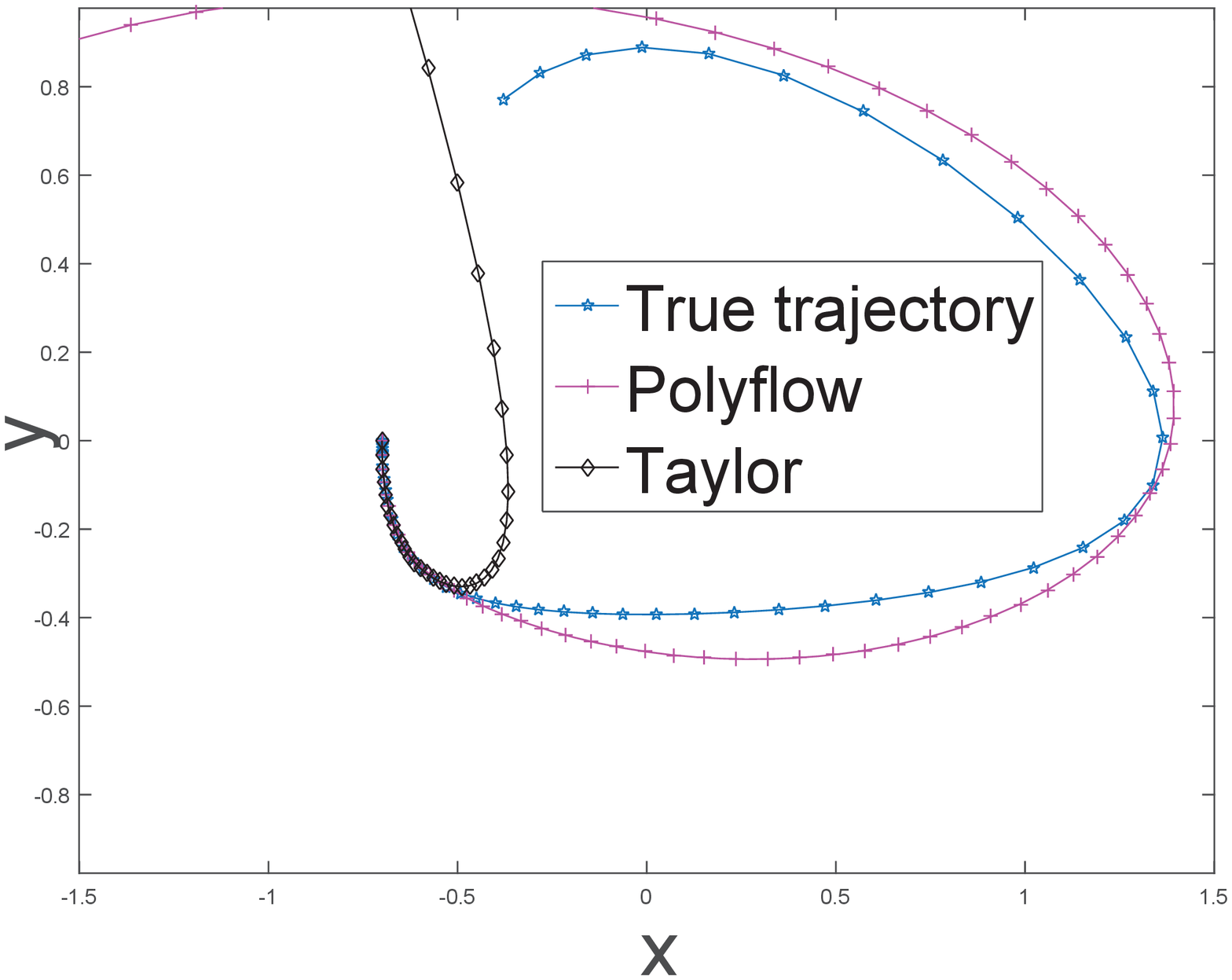}\\(c) & (d)
\end{tabular}
 \caption{The Lotka-Volterra limit cycle and its successive approximations ($N=1,2,6,7$). Comparison of the true trajectory, the Taylor approximation, and the polyflow approximation for several values of $N.$ (The projection for the polyflow approximation is made with respect to the infinity norm on a grid $x\in [-1,1.5];\ y \in[-0.5,1]$ with discretization step equal to $0.1.$)}
\label{fig:lotka}\end{figure}



\section{Conclusion}
In this paper we introduced a natural approach for analysing nonlinear differential equations, by leveraging the notion of `polyflow' from Algebra.
Our technique is nonlocal in two aspects: it is nonlocal in the statespace, in that we approximate the system over a compact set (i.e. not at a particular point), and nonlocal in time, in that one of our goals is to account for asymptotic properties (e.g. asymptotic stability).

Despite its simplicity, we demonstrate that this scheme exhibits very good convergence properties, some of which being theoretically provable.  We hope that such idea of `polyflow approximation' can be pushed further in the future: First, we leave several questions open about the theoretical convergence properties.  Second, we would like to use this technique for more involved objectives than pure stability analysis.  For instance, instead of projecting the nonlinear system on a linear one, one could take the same idea to define an approximate notion of feedback-linearization of nonlinear systems.  Also, the notion of flatness of a nonlinear system, being intrinsically related to the nilpotence of Lie-differential operators, seems well suited for the same type of ideas.
\rmj{Finally, we plan to study the performance of our approach on other systems, and compare it with other techniques, like the Koopman approach.}


\bibliographystyle{ieeetran}        
\bibliography{mybib} 

\end{document}